\newcommand{\affiliation}{\footnote}
\newcommand{\affiliationmark}[1][\value{footnote}-1]{\footnotemark[\numexpr#1+1\relax]}
\definecolor{cblue}{RGB}{0,70,140}
\definecolor{cgreen}{RGB}{100,140,0}
\definecolor{cred}{RGB}{190,10,50}
\setlist{itemsep=0ex,topsep=0ex,parsep=0.4ex}
\tikzset{
    graph/.style={
        line width=1.5pt,
        every node/.style={circle,fill,inner sep=0pt,outer sep=1pt,minimum size=5pt}
    },
    snake/.style={
        decorate,
        decoration={snake,segment length=0.3cm,amplitude=0.05cm}
    }
}
\renewcommand*{\backref}[1]{}
\renewcommand*{\backrefalt}[4]{
	\ifcase #1 Not cited.%
	\or $\uparrow$#2%
	\else $\uparrow$#2%
	\fi%
}
\let\oldbibliography\bibliography
\renewcommand{\bibliography}[1]{
  {

    \hypersetup{linkcolor=cred}
    \bibliographystyle{bibstyle}
    \oldbibliography{#1}
  }
}
\theoremstyle{plain}
\newtheorem{theorem}{Theorem}[section]
\newtheorem{lemma}[theorem]{Lemma}
\newtheorem{corollary}[theorem]{Corollary}
\newtheorem{conjecture}[theorem]{Conjecture}
\newtheorem{problem}[theorem]{Problem}
\theoremstyle{definition}
\newtheorem{definition}[theorem]{Definition}
\renewenvironment{proof}[1][\proofname]
{\par\pushQED{\qed}
	\normalfont\topsep6\p@\@plus6\p@\relax\trivlist
	\item[\hskip\labelsep\bfseries#1\@addpunct{.}]
	\ignorespaces}
{\popQED\endtrivlist\@endpefalse}
\let\emptyset\varnothing
\newcommand{\eps}{\varepsilon}
\newcommand{\pr}{\mathbb{P}}
\newcommand{\ev}{\mathbb{E}}
\newcommand{\cO}{\mathcal O}
\newcommand{\cF}{\mathcal F}
\DeclarePairedDelimiter{\abs}{\lvert}{\rvert}
\DeclarePairedDelimiter{\ceil}{\lceil}{\rceil}
\DeclareMathOperator{\la}{la}
\DeclareMathOperator{\fla}{fla}
\DeclareMathOperator{\End}{End}
\title{New bounds for linear arboricity and related problems}
\author{Micha Christoph\affiliation{Department of Mathematics, ETH Z\"{u}rich, Switzerland (\textsf{\href{mailto:micha.christoph@math.ethz.ch}{micha.christoph@math.ethz.ch}}). Research supported by SNSF Ambizione Grant No. 216071.} \and Nemanja Dragani\'{c}\affiliation{Mathematical Institute, University of Oxford, United Kingdom (\textsf{\{\href{mailto:nemanja.draganic@maths.ox.ac.uk}{nemanja.draganic},\href{mailto:antonio.girao@maths.ox.ac.uk}{antonio.girao},\href{mailto:eoin.hurley@maths.ox.ac.uk}{eoin.hurley},\allowbreak\href{mailto:lukas.michel@maths.ox.ac.uk}{lukas.michel},\href{mailto:alp.muyesser@maths.ox.ac.uk}{alp.muyesser}\}@maths.ox.ac.uk}). Research of Nemanja Dragani\'c supported by SNSF project 217926. Research of Eoin Hurley supported by ERC Advanced Grant 883810.} \and Ant\'{o}nio Gir\~{a}o\affiliationmark[2] \and Eoin Hurley\affiliationmark[2] \and Lukas Michel\affiliationmark[2] \and Alp M\"{u}yesser\affiliationmark[2]}
\date{28 July 2025}
\begin{document}

\maketitle

\begin{abstract}
    A linear forest is a collection of vertex-disjoint paths. The Linear Arboricity Conjecture states that every graph of maximum degree $\Delta$ can be decomposed into at most $\lceil(\Delta+1)/2\rceil$ linear forests. We prove that $\Delta/2 + \cO(\log n)$ linear forests suffice, where $n$ is the number of vertices of the graph. If $\Delta = \Omega(n^\eps)$, this is an exponential improvement over the previous best error term. We achieve this by generalising P\'osa rotations from rotations of one endpoint of a path to simultaneous rotations of multiple endpoints of a linear forest. This method has further applications, including the resolution of a conjecture of Feige and Fuchs on spanning linear forests with few paths and the existence of optimally short tours in connected regular graphs.
\end{abstract}

\section{Introduction}

In 1850, Thomas Kirkman posed the following problem in \textit{The Lady's and Gentleman's Diary}.
\begin{quote}
    Fifteen young ladies in a school walk out three abreast for seven days in succession: it is required to arrange them daily so that no two shall walk twice abreast.
\end{quote}
Put differently, Kirkman was asking for a decomposition\footnote{A \emph{decomposition} of a graph is a partition of its edge set.} of the edges of the complete graph on fifteen vertices into copies of a triangle. Investigating a similar problem in 1882, Walecki proved that for all odd $n$, the complete graph $K_n$ can be decomposed into Hamilton cycles, which are cycles that contain every vertex. What other graphs can $K_n$ be decomposed into? The answer to this question is perhaps counter-intuitive: if basic divisibility conditions are met, then $K_n$ can be decomposed into a wide variety of graphs. Examples include large trees in Ringel's conjecture \cite{montgomery2021proof, keevash2025ringel}, complete graphs in the study of designs \cite{keevash2014existence, glock2023existence}, cycles of various lengths in the Oberwolfach problem \cite{keevash2022generalised}, and many others. Notably, all of these central conjectures have been proved in the past few decades as the machinery associated with absorption and the probabilistic method has become more sophisticated and versatile \cite{alon2016probabilistic, kang2021graph, glock2023existence, montgomery2024transversals, kwan2024high}.

The complete graph, however, is a very special and highly symmetric object, and so it is unclear exactly which of its properties are necessary for these decomposition results. One way to investigate this is to consider dense graphs as a slight weakening of complete graphs. It turns out that here too, a surprising variety of decompositions are possible, and here too there has been significant progress in the past few decades. Perhaps the largest body of work in this direction concerns the decomposition of graphs that satisfy minimum degree conditions. Prominent examples of this include progress on the Nash-Williams conjecture on triangle decompositions \cite{barber2016edge, delcourt2021progress}, generalising Kirkman's problem, and the Hamilton decomposition theorem for regular Dirac graphs \cite{csaba2016proof}, generalising Walecki's result. Another exciting recent direction concerns the decomposition of graphs that are dense and quasirandom. Here, results include generalisations of Ringel's conjecture \cite{keevash2025ringel} and of the Oberwolfach problem \cite{keevash2022generalised}, as well as \emph{approximate} decompositions into arbitrary graphs of bounded degree \cite{kim2019blow}. This work shares some machinery with the case of complete graphs and makes frequent use of Szemer\'edi's famous regularity lemma. Overall, for the decomposition of dense graphs into graphs of fixed size or very sparse spanning structures, the following perspective has emerged.
\begin{quote}
    If there is no obvious barrier to the existence of a particular graph decomposition,\\then the decomposition likely exists.
\end{quote}

But what if we go beyond complete and dense graphs? After all, most graphs encountered in areas like statistical physics, computer science, or group theory are graphs of bounded degree. Is it still true that beyond obvious barriers, decompositions exist? Of course, in general we can no longer hope for decompositions into complete graphs or Hamilton cycles since sparse graphs may not even contain these structures. So, we have to ask for different decompositions.

Two classical and elegant results on the decomposition of sparse graphs are Vizing's theorem \cite{vizing1964estimate} and the Nash-Williams theorem \cite{nash1961edge, nash1964decomposition}. Vizing's theorem, inspired by work of Shannon \cite{shannon1949theorem}, states that any graph of maximum degree $\Delta$ can be decomposed into $\Delta + 1$ matchings. The Nash-Williams theorem states that a graph can be decomposed into $k$ forests if and only if every set of $\ell$ vertices spans at most $k \cdot (\ell-1)$ edges. Thus, in both cases, decompositions exist beyond the obvious barriers. In recent decades, however, progress on sparse decomposition problems has lagged far behind its dense counterparts. This is largely because various tools---such as Szemer\'{e}di’s regularity lemma---offer little leverage in sparse graphs\footnote{Although there exist sparse variants of the regularity lemma \cite{gerke2005sparse,kohayakawa1997szemeredi, scott2011szemeredi}, they only provide information for sparse graphs whose edges are sufficiently well distributed, that is, pseudorandom graphs.}. Among the most famous open problems in this area are the List Edge Colouring Conjecture \cite{vizing1976coloring, erdos1979choosability, kahn2000asymptotics}, the Erd\H{o}s-–Gallai Conjecture on decomposing graphs into cycles and edges \cite{erdos1966representation,bucic2024towards}, and the Linear Arboricity Conjecture.

\subsection{The Linear Arboricity Conjecture}

A \emph{linear forest} is a collection of vertex-disjoint paths, and the \emph{linear arboricity} $\la(G)$ of a graph $G$  is the minimum number of linear forests needed to decompose the edges of $G$ \cite{harary1970covering}. Matchings are a special case of linear forests, and linear forests are a special case of forests, so the linear arboricity of a graph lies between the aforementioned results of Vizing and Nash-Williams. Since Hamilton paths are linear forests with the maximum possible number of edges, linear arboricity can also be viewed as a relaxation of a decomposition into Hamilton paths.

How large is the linear arboricity of a graph? Since a linear forest covers at most two edges incident to any vertex, every graph $G$ with maximum degree $\Delta$ satisfies $\la(G) \ge \ceil{\Delta/2}$. Moreover, a $\Delta$-regular graph $G$ on $n$ vertices has $n \cdot \Delta / 2$ edges while any linear forest of $G$ has strictly less than $n$ edges, and so $\la(G) \ge \ceil{(\Delta+1)/2}$. The celebrated Linear Arboricity Conjecture of Akiyama, Exoo, and Harary \cite{akiyama1980covering} asserts that, beyond these obvious barriers, there should always be a decomposition into linear forests.

\begin{conjecture}[Linear Arboricity Conjecture]
    The linear arboricity of a graph with maximum degree $\Delta$ is at most $\ceil{(\Delta + 1)/2}$.
\end{conjecture}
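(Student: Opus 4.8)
The plan is to attack the conjecture through the orientation formulation that underlies essentially all prior progress, and to supply the missing final step using the generalised P\'osa rotations developed in this paper. First, the standard reductions. Since $\la$ is monotone under subgraphs, and any graph of maximum degree $\Delta$ embeds into a $\Delta$-regular graph on more vertices (adjusting parities with one extra vertex class when $n\Delta$ is odd), it suffices to treat $d$-regular graphs $G$. The parity of $d$ is handled separately: for $d$ odd one peels off a near-perfect matching (or a perfect matching after a small regularising modification) as one of the $(d+1)/2$ forests and reduces to the even case, tracking the few uncovered edges; so we may assume $d$ is even and aim to decompose $G$ into exactly $d/2+1$ linear forests. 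Fix an Eulerian orientation $D$ of $G$, so every vertex has in- and out-degree $d/2$. The target becomes: partition the arcs of $D$ into $d/2+1$ classes, each with in- and out-degree at most $1$ at every vertex \emph{and} containing no directed cycle --- such a class is exactly a directed linear forest. Form the bipartite graph $H$ on $\{v^+:v\in V(G)\}\cup\{v^-:v\in V(G)\}$ with an edge $u^+v^-$ for each arc $u\to v$; then $H$ is $(d/2)$-regular, so K\H{o}nig's theorem decomposes it into $d/2$ perfect matchings, i.e.\ decomposes $D$ into directed $2$-factors $C_1,\dots,C_{d/2}$, each a union of vertex-disjoint directed cycles covering $V(G)$. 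Thus the \emph{only} obstruction is the presence of directed cycles, and we are handed exactly one spare colour class with which to destroy all of them.

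Second, the de-cycling procedure. Initialise the $(d/2+1)$-th class $L$ empty and process the cycles of $C_1,\dots,C_{d/2}$ round by round. To break a directed cycle $Z$ in some class $C_i$, delete one arc $e=u\to v$ of $Z$ from $C_i$ (turning $Z$ into a directed path) and attempt to place $e$ into $L$. If $e$ can be appended to $L$ without creating a cycle or a degree-$2$ violation, do so; otherwise reconfigure $L$ (and possibly some $C_j$) so that $u$ gains out-slack and $v$ gains in-slack in $L$. This is where the main tool of the paper enters: $L$ is a linear forest, and to create the required slack at $u$ and $v$ simultaneously we perform a generalised P\'osa rotation of the endpoints of the path(s) of $L$ through $u$ and $v$, exchanging arcs with the (now acyclic) remnants of the $C_j$'s along the way, exactly as in the endpoint-rotation arguments of this paper. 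Because a single rotation moves \emph{several} endpoints at once, the broken arcs accumulated from many cycles across many classes can be absorbed into $L$ without ever letting the structure of $L$ degenerate.

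Third, the bookkeeping. One maintains the invariant that at every step the classes $C_1',\dots,C_{d/2}',L$ partition the arcs of $D$ into subgraphs of in/out-degree $\le 1$, that $L$ is acyclic, and that the number of still-offending directed cycles across all classes strictly decreases. The delicate point --- and the reason the conjecture remains open --- is that an exchange removing a cycle from $C_i$ or extending $L$ may create a \emph{new} directed cycle in some other class $C_j$ or in $L$ itself, and a priori this cascade need neither terminate nor stay within the single available colour. A full proof therefore hinges on a global potential function (for instance a weighting of arcs by their "depth" in the current cycle structure, strictly decreasing under every admissible rotation) certifying the process halts with all classes acyclic, together with a parity/divisibility argument guaranteeing the final count is exactly $d/2+1$ rather than $d/2+2$, matching $\ceil{(\Delta+1)/2}$.

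I expect the potential-function / termination step to be the main obstacle. All known methods --- Alon's Lov\'asz-Local-Lemma colouring and the rotation-based arguments of this paper alike --- pay an additive error term (polynomial in $\Delta$ in Alon's argument, $\cO(\log n)$ here) precisely because they cannot control this global entanglement of cycles tightly enough to fit into one extra forest; eliminating that error is equivalent to the conjecture. A successful resolution would likely require either a genuinely new exchange operation that de-cycles all $d/2$ classes \emph{simultaneously} rather than one cycle at a time, or a discharging scheme on $D$ that bounds the total rotation cost by the slack of a single linear forest. The generalised P\'osa rotations of this paper seem the most promising vehicle for the last ingredient, but forcing the budget to land exactly at $\ceil{(\Delta+1)/2}$ is the crux and is where I would expect the argument to be hardest.
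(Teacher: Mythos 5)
This statement is the Linear Arboricity Conjecture itself; the paper does not prove it, and neither does your proposal. The paper's actual results (\cref{thm:lineararboricitylogerror,thm:fractionallineararboricitylogerror,thm:smalllinearforest}) only get within an additive $\cO(\log n)$ of the conjectured bound, and \cref{sec:openproblems} explains why closing that gap is currently out of reach. Your text is an attack plan, not a proof, and you say so yourself: the entire burden is deferred to an unconstructed ``global potential function'' certifying that the de-cycling cascade terminates inside a single spare colour class. That is precisely the open problem, so nothing has been established.

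Beyond the acknowledged gap, the specific scheme as stated fails quantitatively at its first step. After the Eulerian orientation and K\H{o}nig decomposition, the classes $C_1,\dots,C_{d/2}$ are directed $2$-factors, and each can contain up to $n/3$ directed cycles, so the total number of cycles to be destroyed can be of order $n d$. Deleting one arc per cycle and ``placing it into $L$'' is therefore impossible on capacity grounds alone: a linear forest on $n$ vertices has fewer than $n$ edges, so $L$ can absorb only a vanishing fraction of the broken arcs. Any workable version must re-insert almost all deleted arcs back into the classes $C_j$ (turning cycles into paths and re-splicing), and controlling the resulting cascade --- new cycles created in other classes, endpoints colliding, parity of the final count --- without paying an additive error is exactly where Alon's Local Lemma argument, the nibble-based refinements, and the rotation machinery of this paper all stop short. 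The generalised rotations of \cref{prop:componentsize} give control over \emph{one} linear forest extracted from a regular graph (every vertex is an endpoint with probability $\cO(1/d)$), but they do not by themselves coordinate $d/2$ classes simultaneously; the union bound over vertices across iterations is what forces the $\cO(\log n)$ loss in \cref{thm:lineararboricitylogerror}, and your proposal offers no mechanism (no potential function, no discharging scheme, no Local-Lemma-compatible dependency structure) to avoid it. So the proposal should be read as a restatement of the known reduction to the regular, evenly oriented case plus a wish list, not as progress on the conjecture.
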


This conjecture immediately received significant attention \cite{akiyama1981covering,enomoto1981linear,peroche1982partition,tomasta1982note,enomoto1984linear,guldan1986linear,guldan1986some}. By 1988, it was known to hold in the special cases $\Delta \in \{3,4,5,6,8,10\}$, and the best general upper bound was $\la(G) \le \ceil{(3\Delta +2)/4}$. In an influential paper, Alon \cite{alon1988linear} then confirmed the Linear Arboricity Conjecture asymptotically, showing that
\[
    \la(G) \le \frac{\Delta}{2} + \cO\left(\frac{\Delta \log \log \Delta}{\log \Delta}\right)=(1+o(1)) \cdot \frac{\Delta}{2}.
\]
This was one of the early applications of the Lov\'{a}sz Local Lemma, giving the Linear Arboricity Conjecture a prominent place in the history of the probabilistic method \cite{alon2016probabilistic}.

Since then, some additional special cases of the conjecture have been solved, such as graphs with minimum degree at least $(1+o(1)) \cdot n/2$ \cite{glock2016optimal,gao2024linear}, planar graphs \cite{wu1999linear, wu2008linear}, random or dense quasirandom graphs \cite{draganic2025optimal, glock2016optimal}, and random regular graphs of constant degree \cite{mcdiarmid1990linear}. The error term of Alon's upper bound was subsequently improved to $\cO(\Delta^{2/3} \log^{1/3} \Delta)$ by Alon and Spencer \cite{alon2016probabilistic}, to $\cO(\Delta^{2/3-\eps})$ by Ferber, Fox, and Jain \cite{ferber2020towards}, and to $\cO(\sqrt{\Delta} \log^4 \Delta)$ by Lang and Postle \cite{lang2023improved}. A central ingredient in all of these upper bounds is a certain degree of randomisation, either through a random partition of the vertex set or through nibble-based strategies. For both approaches, $\Omega(\sqrt{\Delta})$ is a natural bottleneck for the error term, caused by concentration inequalities used throughout these proofs. This is a reflection of the fact that the standard deviation of a binomial random variable with $\Delta$ trials is $\Theta(\sqrt{\Delta})$. Our main contribution is the following upper bound on the linear arboricity.

\begin{theorem}\label{thm:lineararboricitylogerror}
    Every $n$-vertex graph $G$ with maximum degree $\Delta$ satisfies $\la(G) \le \Delta / 2 + \cO(\log n)$.
\end{theorem}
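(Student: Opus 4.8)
The plan is to produce $\Delta/2$ edge-disjoint spanning linear forests that cover all but a sparse remainder, by starting from a Petersen-type decomposition and reorganising it with the generalised P\'osa rotations advertised in the introduction, and then to mop up the remainder with Vizing's theorem.

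\textbf{Reductions.} We may assume $\Delta > 100\log n$, since otherwise Vizing's theorem already gives $\la(G)\le\Delta+1\le\Delta/2+\cO(\log n)$. As $\la(G)$ is the maximum of $\la$ over the connected components of $G$, as any connected graph of maximum degree $\Delta$ embeds, by a routine construction, into a connected $D$-regular graph on $\cO(n)$ vertices with $D\in\{\Delta,\Delta+1\}$ even, and as $\la(G)\le\la(H)$ whenever $G\subseteq H$, we may assume that $G$ is connected and $\Delta$-regular with $\Delta$ even; replacing $n$ by $\cO(n)$ and $\Delta$ by $\Delta+1$ affects neither the target bound nor the fact that $\Delta=\omega(\log n)$. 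Write $\Delta=2d$.

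\textbf{The skeleton.} By Petersen's $2$-factor theorem, $E(G)$ decomposes into $2$-factors $F_1,\dots,F_d$, each a disjoint union of cycles. Deleting one edge from every cycle turns $F_i$ into a spanning linear forest $L_i$ whose number of paths equals the number of cycles of $F_i$, and the deleted edges form a remainder graph $R$ with $\deg_R(v)$ equal to the number of cycles (over all $F_i$) opened at an edge incident to $v$. If we can arrange $\Delta(R)=\cO(\log n)$, we are done: the $L_i$ are $d$ edge-disjoint spanning linear forests and, by Vizing, $\la(R)\le\Delta(R)+1=\cO(\log n)$, so $\la(G)\le d+\cO(\log n)=\Delta/2+\cO(\log n)$. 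Two obstructions remain: the $F_i$ may contain many, hence short, cycles; and even when all cycles are long, the chosen opening edges could pile up at a few vertices.

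\textbf{Rerouting via multi-endpoint rotations.} This step is the heart of the matter and, I expect, where essentially all of the difficulty lies. Swapping edges between the $2$-factors, one reorganises the decomposition $\{F_i\}$ so that each $F_i$ consists of few long cycles and, crucially, so that one edge per cycle can be deleted in a way that is globally balanced across the vertices. The engine is a generalisation of P\'osa's rotation: given an opened-up $2$-factor, viewed as a spanning linear forest, one rotates many of its path-endpoints along their paths \emph{simultaneously}, producing an exponentially large family of alternative endpoint configurations; a well-chosen configuration supplies a system of ``crossing'' edges of $G$ along which cycles can be merged in pairs, with the consumed edges rerouted out of the other $2$-factors in a controlled way. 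Iterating this, always prioritising cycles incident to currently overloaded vertices, drives every $\deg_R(v)$ down to $\cO(\log n)$. The genuinely delicate points are: (i) vertices trapped inside small vertex- or edge-separators, where rotations stall --- handled by exploiting $\Delta$-regularity together with either a preliminary decomposition of $G$ into well-connected pieces joined by a sparse cut absorbed into $R$, or the rotation argument itself exposing and absorbing such cuts; (ii) ensuring that rotations performed across the $d$ different $2$-factors do not collide, which forces the reorganisation to proceed in carefully staged, partly randomised rounds; and (iii) the $\cO(\log n)$ error itself, which enters both through the concentration inequalities used to balance the opening edges and through the sparse cuts discarded along the way. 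Once this reorganisation is complete, the Vizing step above finishes the proof.
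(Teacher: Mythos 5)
Your outer shell matches the paper's (extract $\Delta/2$ spanning linear forests, keep the leftover of maximum degree $\cO(\log n)$, finish with Vizing), but the step you yourself flag as ``the heart of the matter'' is exactly the content of the paper that your proposal does not supply, and what you sketch in its place would not go through as stated. The whole difficulty is to prove a quantitative statement guaranteeing that the forests can be chosen so that no vertex is charged (as a path-endpoint / opened-cycle vertex) more than $\cO(\log n)$ times over the $d$ rounds. In the paper this rests on two concrete results: the rotation-component lemma (\cref{prop:componentsize}), which shows that for a \emph{minimum} spanning linear forest of a $d$-regular graph with $\ell$ unfixed endpoints the set of vertices reachable as endpoints by simultaneous rotations has size at least $\ell(d+1)/4$; and \cref{lem:lowendpointprobability}, which converts this, by an induction that fixes endpoints one at a time at uniformly random reachable vertices, into a distribution on minimum linear forests in which every vertex is an endpoint with probability at most $16/(d+1)$. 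Only with that per-vertex probability bound, applied at degree $2i$ in round $i$, does the stochastic-domination/Chernoff/union-bound computation give $\cO(\log n)$ leftover degree. Your paragraph ``Rerouting via multi-endpoint rotations'' asserts that rotations let you merge cycles and ``delete one edge per cycle in a globally balanced way,'' and then lists the obstructions (short cycles, stalled rotations across sparse cuts, collisions between the $d$ factors, the source of the $\log n$) without resolving any of them; there is no lemma of the form ``rotation-components have size $\Omega(\ell d)$'' or ``a vertex can be made an endpoint with probability $\cO(1/d)$,'' and without such a statement the balancing claim has no proof.

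A secondary structural problem is that you fix a Petersen $2$-factorisation of $G$ once and for all and then try to repair each $F_i$ in situ. Then any rotation used to merge cycles of $F_i$ must consume edges lying in other $F_j$'s, which is precisely the collision problem you name in (ii) and which your staged/randomised rounds do not actually handle. The paper sidesteps this entirely: it works iteratively, at step $i$ taking a $2i$-regular graph $G_i$, extracting one random minimum linear forest $F_i$ of $G_i$ (via \cref{lem:lowendpointprobability}) together with the edges of a $2$-factor of $G_i$ incident to $\End(F_i)$, and re-regularising the remainder to degree $2(i-1)$, so each forest is free to use any edge still present and no cross-factor rerouting is ever needed. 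Also note that the number of paths of a minimum linear forest being small (the Feige--Fuchs bound $\cO(n/d)$, \cref{thm:smalllinearforest}) is not by itself enough for your Chernoff step: few endpoints in total does not prevent the same vertex from being an endpoint in many rounds; the randomised, per-vertex $\cO(1/d)$ bound is what is needed, and that is the missing ingredient in your argument.
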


This result breaks the square-root barrier for the Linear Arboricity Conjecture if $\Delta = \Omega(\log^2 n)$ and represents an exponential improvement in the error term for $\Delta = \Omega(n^\eps)$. Before our work, there were no results that improved on the upper bound of Lang and Postle \cite{lang2023improved} even for very dense graphs with $\Delta = \Omega(n)$.

For the fractional relaxation of linear arboricity, the previous best error term was also stuck at the same square-root barrier. Formally, the \emph{fractional linear arboricity} $\fla(G)$ of a graph $G$ is the smallest real number $k$ for which we can choose a random linear forest of $G$ that contains every fixed edge of $G$ with probability at least $1/k$. Clearly, $\Delta/2 \le \fla(G) \le \la(G)$. Feige, Ravi, and Singh \cite{feige2014short} showed that $\fla(G) \le \Delta/2 + \cO(\sqrt{\Delta})$. We improve the error term in this result to $\cO(\log \Delta)$, providing an exponential improvement for all ranges of $\Delta$.

\begin{theorem}\label{thm:fractionallineararboricitylogerror}
    Every graph $G$ with maximum degree $\Delta$ satisfies $\fla(G) \le \Delta / 2 + \cO(\log \Delta)$.
\end{theorem}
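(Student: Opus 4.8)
The plan is to prove \Cref{thm:fractionallineararboricitylogerror} by producing, for an arbitrary graph $G$ with maximum degree $\Delta$, a probability distribution over linear forests of $G$ under which every edge is covered with probability at least $1/(\Delta/2 + \cO(\log\Delta))$. The natural first move is a reduction to the $\Delta$-regular case: embed $G$ into a $\Delta$-regular graph $H$ on a possibly larger vertex set (adding a few dummy vertices and, if needed, passing to a bipartite double cover to handle parity of $\Delta$), so that it suffices to exhibit a random linear forest of $H$ that hits each edge with the desired probability. By symmetry/averaging it is enough to find a single linear forest, or a short list of linear forests, covering a $(1-o(1))$-fraction of the edges in a balanced way; equivalently, one wants to decompose most of $E(H)$ into roughly $\Delta/2 + \cO(\log\Delta)$ linear forests, and then let the random linear forest be a uniformly chosen one from this collection (padding with the empty forest to account for the uncovered edges).

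Concretely, I would proceed as follows. First, take an Eulerian orientation of $H$ (possible since $H$ is regular; if $\Delta$ is odd, first split $H$ into a $(\Delta-1)$-regular part plus a perfect matching, handling the matching trivially as one extra forest), giving an orientation in which every vertex has in-degree and out-degree $\Delta/2$. This orientation corresponds to a bipartite graph $B$ between ``out-copies'' and ``in-copies'' of the vertices, which is $(\Delta/2)$-regular, and by König's theorem decomposes into $\Delta/2$ perfect matchings $M_1,\dots,M_{\Delta/2}$. Each $M_i$ pulls back to a spanning subgraph of $H$ in which every vertex has total degree $2$ — that is, a disjoint union of paths and cycles (a ``$2$-factor''). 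The only obstruction to $M_i$ being a linear forest is the presence of cycles. The key step, and the place where I expect the real work to lie, is to break all of these cycles using only $\cO(\log\Delta)$ additional forests: here one should invoke the generalised P\'osa rotation technique advertised in the abstract. The idea is that for each $2$-factor $F_i$, one can simultaneously rotate the endpoints of the paths (and open up each cycle into a path) by swapping a bounded number of edges among the various $F_i$ and into a small ``reservoir'' of $\cO(\log\Delta)$ leftover forests, in such a way that the multiset of edges is preserved but every cycle is destroyed. A union-bound / Lov\'asz-Local-Lemma-style argument — or a direct probabilistic rotation argument, exploiting that a random such rotation breaks a given cycle with constant probability and that $\cO(\log\Delta)$ rounds then succeed for all cycles simultaneously — should control the number of extra forests at $\cO(\log\Delta)$, matching the known threshold from \Cref{thm:lineararboricitylogerror}'s methods but now with the error measured in $\Delta$ rather than $n$, since the local rotation structure only ever involves a bounded neighbourhood.

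Finally, I would assemble the pieces: the $\Delta/2$ (or $\Delta/2 + 1$ in the odd case) near-linear-forests obtained from König's theorem, repaired into genuine linear forests using the $\cO(\log\Delta)$ reservoir forests, give a decomposition of $E(H)$ into $k := \Delta/2 + \cO(\log\Delta)$ linear forests $L_1,\dots,L_k$. Choosing $j \in \{1,\dots,k\}$ uniformly at random and outputting $L_j$ restricted to $E(G)$ yields a random linear forest of $G$ containing each edge with probability exactly $1/k = 1/(\Delta/2 + \cO(\log\Delta))$, which is the claimed bound on $\fla(G)$. The main obstacle, as noted, is making the simultaneous-rotation argument work uniformly across all $2$-factors with only logarithmically many spare forests and with all bookkeeping local (hence $\Delta$-dependent, not $n$-dependent); everything else is a standard regularisation-and-König packaging. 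One subtlety to be careful about is that rotations must not create new cycles while destroying old ones, which is why a small but nonzero reservoir — rather than zero extra forests — is needed, and why the error term is $\cO(\log\Delta)$ rather than $O(1)$.
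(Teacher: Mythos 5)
There is a genuine gap, and it sits exactly where you place ``the real work'': the claim that the $\Delta/2$ cycle-factors obtained from the Eulerian orientation and K\H{o}nig's theorem can be ``repaired'' into linear forests using only $\cO(\log\Delta)$ reservoir forests. This step is not a routine application of rotations --- if it worked, you would have decomposed \emph{all} of $E(G)$ into $\Delta/2+\cO(\log\Delta)$ linear forests, i.e.\ proved $\la(G)\le\Delta/2+(\log\Delta)^{\cO(1)}$ with an error depending only on $\Delta$. That statement is strictly stronger than the paper's \cref{thm:lineararboricitylogerror} and is explicitly posed as an open problem in \cref{sec:openproblems}. Quantitatively, the difficulty is that the $\Delta/2$ two-factors may consist of $\Theta(n\Delta)$ short cycles in total, and all the edge-swaps needed to destroy them can concentrate at single vertices; nothing in your sketch (``a random rotation breaks a given cycle with constant probability'') controls this concentration, and the assertion that ``the local rotation structure only ever involves a bounded neighbourhood'' is unsupported --- the paper's rotation-components $C(F,X)$ are global objects, which is precisely why its integral error term is $\cO(\log n)$ and not $\cO(\log\Delta)$. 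Moreover, the paper's rotation machinery (\cref{prop:componentsize}) operates on a \emph{minimum spanning linear forest} and its multiset of endpoints; it says nothing about opening cycles of a 2-factor or exchanging edges between several 2-factors, so it cannot simply be ``invoked'' for your repair step.

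You also discard the one piece of slack that makes the fractional problem genuinely easier than the integral one. The paper does \emph{not} produce a decomposition of $G$: it builds a distribution on minimum linear forests in which each vertex is an endpoint with probability $\cO(1/d)$ (\cref{lem:lowendpointprobability}, via \cref{prop:componentsize}), iteratively extracts $d=\ceil{\Delta/2}$ such random forests with re-regularisation, and then \emph{throws away} every edge incident to a vertex whose residual degree exceeds $c=\cO(\log d)$. Since for the fractional bound one only needs each fixed edge to be covered with probability $1/(d+\cO(\log d))$, a per-vertex failure probability of $1/d^2$ suffices with no union bound over vertices --- this is exactly why the error becomes $\cO(\log\Delta)$ rather than $\cO(\log n)$. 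Your averaging remark at the start correctly identifies this slack, but your construction then aims for a lossless decomposition and inherits the full difficulty of the integral problem. To salvage the argument you would need either the paper's endpoint-distribution lemma (or an equivalent), or a genuinely new way to repair cycle-factors, which is not supplied here.
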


Our proofs use a novel variant of P\'{o}sa rotations for linear forests. We discuss our proof techniques in more detail in \cref{ssec:prooftechniques}. In particular, this section outlines the key lemma that we use to prove our results, \cref{prop:componentsize}. In addition to being crucial for our upper bounds on the linear arboricity, this lemma also has further applications, which we discuss in the following.

\subsection{Spanning linear forests with few paths and short tours}\label{ssec:linearforeststours}

If the Linear Arboricity Conjecture holds, then, by averaging, it follows that every $d$-regular graph\footnote{Recall that any graph with maximum degree $\Delta$ is contained in a $\Delta$-regular graph, and so we may, without loss of generality, assume that a given graph is regular whilst obtaining bounds for the Linear Arboricity Conjecture.} on $n$ vertices has a spanning linear forest with as few as $2 \cdot n/(d+2)$ paths\footnote{See \cite[Section 1.1]{feige2022path} for a detailed discussion of these calculations.}. This consequence of the Linear Arboricity Conjecture is a challenging and well-studied problem in its own right. Magnant and Martin \cite{magnant2009note} conjectured that every $d$-regular graph has a spanning linear forest with $n/(d+1)$ paths. Feige and Fuchs later conjectured that the weaker bound $\cO(n/d)$ should hold. These conjectures have been reiterated several times in recent work \cite{montgomery2024approximate, letzter2025nearly}.

The conjecture of Magnant and Martin would be tight, as witnessed by a disjoint union of cliques. However, this conjecture is only known to hold for $d \le 6$ \cite{magnant2009note,feige2022path}, $d = \Omega(n)$ \cite{han2018vertex,gruslys2021cycle}, and for partitioning almost all vertices of a graph into paths \cite{montgomery2024approximate,letzter2025nearly}. The upper bound on the fractional linear arboricity of Feige, Ravi, and Singh \cite{feige2014short} implies that there is a spanning linear forest with $\cO(n/\sqrt{d})$ paths. Very recently, the authors of the current paper \cite{christoph2025cyclefactors} broke the square-root barrier for this problem, improving the upper bound to $\cO((n\log d)/d)$.

These results employed a variety of different approaches. In addition to strategies also used for the Linear Arboricity Conjecture, such as random partitioning arguments \cite{montgomery2024approximate}, previous techniques included Szemer\'edi's regularity lemma \cite{han2018vertex}, robust expander decompositions \cite{gruslys2021cycle,letzter2025nearly}, and entropy-based methods \cite{christoph2025cyclefactors}. Using our new methods based on rotations of linear forests, we establish the conjecture of Magnant and Martin up to a factor of two. In particular, this proves the conjecture of Feige and Fuchs.

\begin{theorem}\label{thm:smalllinearforest}
    Every $d$-regular $n$-vertex graph has a spanning linear forest with at most $2 \cdot n / (d+1)$ paths.
\end{theorem}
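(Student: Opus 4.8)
The plan is to derive \cref{thm:smalllinearforest} from the rotation lemma \cref{prop:componentsize} by an extremal argument. First I would record the elementary translation: a spanning linear forest of an $n$-vertex graph with $p$ paths (counting isolated vertices as trivial one-vertex paths) has exactly $n-p$ edges, so the statement is equivalent to producing a linear forest $F \subseteq G$ with at least $n(d-1)/(d+1)$ edges. Accordingly, fix a linear forest $F$ in $G$ with the \emph{maximum} number of edges, equivalently the minimum number $p$ of paths, and suppose for contradiction that $p > 2 n/(d+1)$. (Note the target is within a factor $2$ of the truth, the extremal example being a disjoint union of $K_{d+1}$'s, so some slack in the argument is affordable.)

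Next I would exploit maximality through rotations. Because $F$ has the most edges of any linear forest in $G$, it admits no ``rotate-and-merge'' operation: there is no sequence of P\'osa rotations applied to the paths of $F$ — which preserve the number of edges — after which some edge of $G$ joins the endpoints of two distinct paths, since merging along such an edge would strictly increase the number of edges. Hence the entire rotation closure of $F$ consists of edge-maximum, and therefore inextensible, linear forests. This is exactly the situation \cref{prop:componentsize} is designed for: applying it to this inextensible configuration yields structural control on the reachable endpoint sets, in particular a bound on the ``spread'' of the set $S$ of vertices that occur as an endpoint of some path in some configuration reachable from $F$, together with the fact that in every reachable configuration the endpoints of distinct paths form an independent set of $G$.

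The contradiction then comes from $d$-regularity. With $p > 2n/(d+1)$ paths and only $n-p < n(d-1)/(d+1)$ edges, the paths have average length below $(d-1)/2$, so a majority of them are short, with at most $d-1$ vertices; each endpoint of such a short path has at least one $G$-neighbour outside its own path, and feeding this extra ``escape edge'' into the rotation mechanism forces the reachable endpoint sets attached to the short paths to keep growing. Since there are more than $n/(d+1)$ short paths and every vertex has degree exactly $d$, the spread bound of \cref{prop:componentsize} cannot accommodate all of these sets at once while keeping the endpoints of distinct paths pairwise non-adjacent; so some reachable configuration must place a $G$-edge between the endpoints of two different paths. Merging there contradicts the edge-maximality of $F$, and therefore $p \le 2n/(d+1)$.

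I expect the main obstacle to be the simultaneous-rotation bookkeeping underlying the second step. Classical P\'osa rotations act on a single path with one pinned endpoint, whereas here one must rotate many paths in parallel, track which joint endpoint configurations are reachable, ensure that rotations performed on one path never destroy the endpoint structure being exploited on another, and extract the sharp constant (rather than a weaker $\cO(1)$ that a cruder argument would give). This is precisely the content packaged into \cref{prop:componentsize}, so the real work lies in invoking it with the right choice of which paths and endpoints to follow, and in checking that $d$-regularity translates cleanly into the forced collision between reachable endpoint sets claimed above.
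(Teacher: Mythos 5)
You picked the right extremal object (a linear forest with the minimum number of paths, equivalently the maximum number of edges) and the right key lemma, but the way you close the argument does not follow from \cref{prop:componentsize} and is in fact unnecessary. The lemma is a \emph{lower} bound on the size of a single rotation-component: with $X=\emptyset$ it says $\abs{C(F,\emptyset)} \ge \frac{d+1}{4}\cdot\abs{\End(F)}$, where all unfixed endpoints are treated together. It does not provide a ``spread bound'' on a family of per-path reachable endpoint sets, nor does it assert that endpoints of distinct paths form an independent set in every reachable configuration (that independence is a direct consequence of minimality of $F$ and is an ingredient \emph{inside} the lemma's proof, not an output you get to reuse). Consequently your pivotal step --- ``the spread bound of \cref{prop:componentsize} cannot accommodate all of these sets at once \dots\ so some reachable configuration must place a $G$-edge between the endpoints of two different paths'' --- is not derived from anything you have stated; as written it is an unsupported assertion, and the surrounding machinery (the averaging showing most paths are short, the ``escape edges'' feeding the rotation mechanism) never connects to the lemma's actual conclusion.

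The correct deduction is a one-line counting argument, which is how the paper proceeds: let $F$ be a minimum linear forest and apply \cref{prop:componentsize} with $X=\emptyset$ to get
\[
    \abs{\End(F)} \le \frac{4}{d+1}\cdot\abs{C(F,\emptyset)} \le \frac{4n}{d+1},
\]
since $C(F,\emptyset)\subseteq V(G)$. As the number of endpoints is twice the number of paths, $F$ has at most $2n/(d+1)$ paths --- no contradiction argument, no case distinction between short and long paths, and no forced collision between endpoint sets is needed. The merging-along-an-edge phenomenon you invoke is exactly what is already exploited in the proof of \cref{prop:componentsize} itself (via minimality of $F$), so re-deriving it at the level of the theorem both duplicates that work and, in your formulation, leaves a genuine gap.
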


For regular bipartite graphs, we even confirm the conjecture of Magnant and Martin, see \cref{thm:smalllinearforestbipartite}. We prove these results in \cref{sec:smalllinearforests} as an immediate consequence of the key lemma that we use to make progress on the Linear Arboricity Conjecture.

Finally, our results imply that connected $d$-regular graphs have short tours, where a \emph{tour} is a sequence of adjacent vertices that ends at the starting vertex and visits every vertex at least once. The problem of finding such tours was introduced by Vishnoi \cite{vishnoi2012permanent} as a restriction of the Travelling Salesman Problem to regular graphs. Feige, Ravi, and Singh \cite{feige2014short} showed that every connected $d$-regular graph on $n$ vertices has a tour of length $(1+\cO(1/\sqrt{d})) \cdot n$ and observed that one cannot do better than $(1+\cO(1/d)) \cdot n$. The authors of the current paper \cite{christoph2025cyclefactors} very recently showed that tours of length $(1 + \cO((\log d)/d)) \cdot n$ exist. Here, we resolve this problem. This is a direct consequence of \cref{thm:smalllinearforest} in combination with a result of Feige, Ravi, and Singh \cite[Corollary 1]{feige2014short}.

\begin{corollary}
    Every connected $d$-regular $n$-vertex graph has a tour of length at most $(1+\cO(1/d)) \cdot n$. 
\end{corollary}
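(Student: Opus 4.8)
The plan is to derive the corollary directly from \cref{thm:smalllinearforest} combined with the cited result of Feige, Ravi, and Singh \cite[Corollary 1]{feige2014short}. That result converts a spanning linear forest with few paths in a connected regular graph into a short tour: if a connected graph on $n$ vertices admits a spanning linear forest with $p$ paths, then it has a tour of length at most $n - 1 + c \cdot p$ for some absolute constant $c$ (the intuition being that one can stitch the paths together into a closed walk by adding a bounded number of edges per path, using connectivity to route between the path endpoints cheaply). I would first recall this statement precisely, then invoke \cref{thm:smalllinearforest}.

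Concretely, let $G$ be a connected $d$-regular graph on $n$ vertices. By \cref{thm:smalllinearforest}, $G$ has a spanning linear forest with at most $2n/(d+1)$ paths. Feeding $p \le 2n/(d+1)$ into the Feige--Ravi--Singh conversion yields a tour of length at most $n - 1 + c \cdot 2n/(d+1) = \bigl(1 + \cO(1/d)\bigr) \cdot n$, which is exactly the claimed bound. There is essentially nothing to optimise here: the $\cO(1/d)$ error term is inherited verbatim from the linear-forest count, and the main work has already been done in proving \cref{thm:smalllinearforest}.

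The only point requiring a little care is checking the hypotheses of \cite[Corollary 1]{feige2014short} — in particular that it applies to all connected $d$-regular graphs (including small $d$, where the bound is vacuous or trivially true anyway) and that the constant in its tour-length guarantee is indeed absolute and independent of $d$ and $n$. Since \cref{thm:smalllinearforest} makes no assumption beyond $d$-regularity, and connectivity is assumed in the corollary, the two results compose without friction. I do not anticipate any genuine obstacle: the corollary is a one-line deduction, and the substantive content lives entirely in \cref{thm:smalllinearforest}, whose proof (via the rotation-based key lemma \cref{prop:componentsize}) is carried out separately in \cref{sec:smalllinearforests}.
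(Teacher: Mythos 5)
Your proposal is correct and matches the paper exactly: the paper also derives the corollary as a one-line consequence of \cref{thm:smalllinearforest} combined with \cite[Corollary 1]{feige2014short}, which converts a spanning linear forest with few paths into a correspondingly short tour. Nothing further is needed.
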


It is straightforward to construct regular connected graphs that are not Hamiltonian. However, our results show that, at least in some sense, regular graphs are close to being Hamiltonian. 

\subsection{Proof techniques}\label{ssec:prooftechniques}

A rotation of a path or a linear forest is a simple operation that modifies the path or linear forest in such a way that exactly one of its endpoints moves to a different vertex of the graph (see also \cref{fig:linearforestrotations}). This idea can be traced back to Dirac's theorem \cite{dirac1952some} stating that graphs on $n$ vertices with minimum degree at least $n/2$ are Hamiltonian. It was also used by P\'osa \cite{posa1976hamiltonian} to determine the threshold for the Hamiltonicity of random graphs. P\'osa's proof is notable for being the first to consider \textit{the family of all possible sequences of rotations} that can be performed to a given path of maximal length. This allows one to show that many vertices can become an endpoint of such a path, which in turn is crucial for showing that sufficiently dense random graphs are Hamiltonian. Since this was a very influential idea, rotations are often referred to as \emph{P\'osa rotations} in the literature. Although the idea of rotations has been around for a long time, several breakthroughs in recent years were only made possible by introducing novel twists to this method \cite{glock2024hamilton, draganic2024pancyclicity, draganic2024hamiltonicity}.

\begin{figure}[ht]
    \centering
    \begin{tikzpicture}[line width=1pt]
    \begin{scope}
        \begin{scope}[graph]
            \node[label=below:$v$] (v) at (0,0) {};
            \node[label=below:$w$] (w) at (2,0) {};
            \node[label=below:$u$] (u) at (3,0) {};
            \draw (v) -- (w) -- (u) -- (4,0);
            \draw[dotted] (v) to[bend left=50] (u);
        \end{scope}
        \draw[->] (5.1,0) -- (5.9,0);
        \begin{scope}[graph,xshift=7cm]
            \node[label=below:$v$] (v) at (0,0) {};
            \node[label=below:$w$] (w) at (2,0) {};
            \node[label=below:$u$] (u) at (3,0) {};
            \draw (v) -- (w)  (u) -- (4,0) (v) to[bend left=50] (u);
            \draw[dotted] (w) -- (u);
        \end{scope}
    \end{scope}
    \begin{scope}[yshift=-3cm]
        \begin{scope}[graph]
            \node[label=below:$v$] (v) at (0,0) {};
            \node[label=above:$u$] (u) at (1.5,1) {};
            \node[label=above:$w$] (w) at (2.5,1) {};
            \draw (v) -- (4,0) (0,1) -- (u) -- (w) -- (4,1);
            \draw[dotted] (v) -- (u);
        \end{scope}
        \draw[->] (5.1,0.5) -- (5.9,0.5);
        \begin{scope}[graph,xshift=7cm]
            \node[label=below:$v$] (v) at (0,0) {};
            \node[label=above:$u$] (u) at (1.5,1) {};
            \node[label=above:$w$] (w) at (2.5,1) {};
            \draw (v) -- (4,0) (0,1) -- (u) -- (v) (w) -- (4,1);
            \draw[dotted] (u) -- (w);
        \end{scope}
    \end{scope}
\end{tikzpicture}
    
    \caption{The two different types of rotations of a linear forest. Dotted edges represent edges of the graph that are not contained in the linear forest.}
    \label{fig:linearforestrotations}
\end{figure}
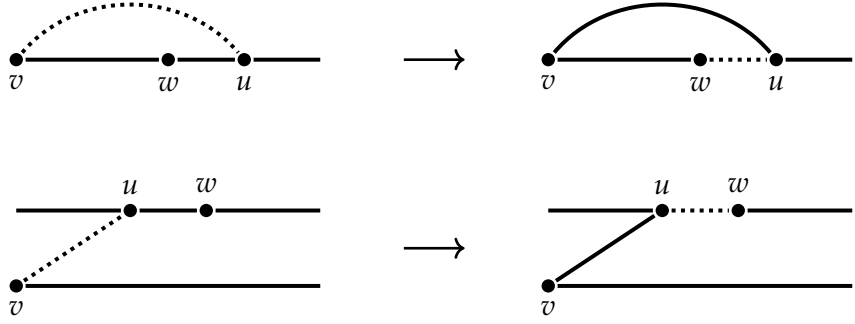

A common feature of these applications is some notion of expansion in the underlying graph. In contrast, we apply rotations to sparse regular graphs without any expansion conditions. Moreover, while P\'{o}sa rotations are only about rotations of \emph{a single endpoint} of a path, our main novelty is to consider simultaneous rotations of \emph{a set of endpoints} of a linear forest. Concretely, our key lemma, \cref{prop:componentsize}, proves the following.
\begin{quote}
    Suppose we are given a minimum spanning linear forest of a $d$-regular graph and a set of $\ell$ endpoints. If we allow any sequence of rotations that only involves this set of endpoints, then at least $\Omega(\ell\cdot d)$ vertices can become an endpoint.
\end{quote}
As we make no assumptions on the structure of the underlying regular graph, this requires a highly delicate argument. We prove this lemma in \cref{sec:smalllinearforests} where we also concisely explain the key idea of its proof. For now, note that if there are at least $\Omega(\ell \cdot d)$ distinct potential endpoints, it immediately follows that $\ell = \cO(n/d)$, which implies \cref{thm:smalllinearforest} and the conjecture of Feige and Fuchs. Moreover, in \cref{sec:lineararboricity}, we use this key lemma to create a distribution on the spanning linear forests of the graph in which every vertex has a probability of at most $\cO(1/d)$ of being an endpoint. To prove our bounds on the (fractional) linear arboricity, \cref{thm:lineararboricitylogerror,thm:fractionallineararboricitylogerror}, we then iteratively remove $\Delta/2$ such random linear forests from the graph and decompose the remaining edges into matchings using Vizing's theorem. Note that a vertex has a high degree in the remaining graph only if it was an endpoint in many of the removed linear forests. Since every vertex is unlikely to be an endpoint, we can apply standard concentration inequalities and a union bound to show that the maximum degree of the remaining graph is $\cO(\log n)$. This gives the error term of $\cO(\log n)$ in our result. We refer the reader to \cref{sec:openproblems} for a discussion on the difficulties of improving this error term further.

Throughout the remainder of the paper, every linear forest is a spanning linear forest.

\section{Small linear forests in regular graphs}\label{sec:smalllinearforests}

In this section, we will show that any $d$-regular graph $G$ on $n$ vertices has a spanning linear forest with at most $2 \cdot n/(d+1)$ paths. Our main tool to achieve this are rotations of linear forests. A rotation modifies a linear forest by adding an edge incident to some endpoint of the linear forest and removing a different edge so that the resulting subgraph is still a linear forest. Intuitively, a rotation moves one endpoint of the linear forest to a different vertex of the graph. \cref{fig:linearforestrotations} illustrates the two different types of possible rotations of a linear forest.

\begin{definition}
    Let $F$ be a linear forest of a graph $G$. Let $v$ be an endpoint of $F$, and let $u$ be a neighbour of $v$ in $G$. If $u$ and $v$ are on the same path in $F$, let $w$ be the neighbour of $u$ that is closer to $v$ on that path. Otherwise, let $w$ be any neighbour of $u$ on its path in $F$. Then, the linear forest $F'$ obtained from $F$ by removing the edge $u w$ and adding the edge $u v$ is called a \emph{rotation} of $F$ in $G$.
\end{definition}

We say that such a rotation rotates the \emph{old endpoint} $v$ to the \emph{new endpoint} $w$, using the \emph{pivot} $u$. The restriction on $w$ in the case where $u$ and $v$ are on the same path is necessary to ensure that the resulting subgraph is still a linear forest. Note that $w$ is allowed to be the same vertex as $v$, which could happen if $u$ is the unique neighbour of $v$ on its path in $F$. Also, observe that if $F'$ is a rotation of $F$, then $F'$ has the same number of paths as $F$.

If we can use rotations to reach a linear forest where two endpoints of \emph{distinct} paths are at adjacent vertices of the graph, then we can reduce the number of paths of the linear forest by adding the edge between these two endpoints. Suppose that $F$ is a linear forest where we can no longer do this, for example because $F$ has the minimum number of paths possible. To show that $F$ has few paths, we will consider the set $C$ of all vertices of the graph that could become an endpoint of $F$ through some sequence of rotations. We claim that if $F$ has $\ell$ endpoints, then the size of $C$ is at least $\ell \cdot (d+1)/4$. Since the size of $C$ is at most the total number of vertices of the graph, this claim implies that $F$ has at most $4 \cdot n / (d+1)$ endpoints and therefore at most $2 \cdot n / (d+1)$ paths, as required.

We now sketch our proof of this claim. First, suppose for simplicity that $G$ is bipartite. In this case, by only rotating endpoints in one part of the bipartition of $G$, we may assume that $C$ is an independent set. Then, we will analyse the edges between $C$ and its \emph{boundary} $B \coloneqq N_G(C) \setminus C$\footnote{If $G$ is a graph and $X \subseteq V(G)$, the \emph{neighbourhood} of $X$ in $G$ is $N_G(X) \coloneqq \{v \in V(G) : u v \in E(G) \text{ for some } u \in X\}$.}.

In particular, we exploit the tension between the following two facts. Firstly, each vertex in $C$ has $d$ neighbours in $B$ while each vertex in $B$ has at most $d$ neighbours in $C$. Secondly, each vertex in $C$ has at most two path-neighbours\footnote{A \emph{path-neighbour} of a vertex $v$ is a neighbour of $v$ on its path in $F$.} in $B$ while each vertex in $B$ has at least one path-neighbour in $C$\footnote{The fact that every vertex in $B$ has at least one path-neighbour in $C$ follows from a standard argument for P\'{o}sa rotations.}. Using a double counting argument on these edges and combining this with the fact that $C$ contains at least one endpoint, we can find a vertex $u \in B$ with $(d+1)/2$ neighbours in $C$, but which has only one path-neighbour in $C$.

Now, rotate one endpoint of $F$ to an arbitrary neighbour $v$ of $u$ and \emph{fix it there}. If we never allow $v$ to be rotated again, we claim that no other endpoint can be rotated to a neighbour $v'$ of $u$. Indeed, if that were possible, a short case analysis shows that either one of the two endpoints $v$ or $v'$ could be rotated to a vertex outside of $C$, contradicting the definition of $C$, or one of the two edges $u v$ or $u v'$ could be added to $F$, contradicting the minimality of $F$. This case analysis uses the fact that $u$ has only one path-neighbour in $C$.

Therefore, fixing an endpoint of $F$ in the neighbourhood of $u$ reduces the number of endpoints that we rotate by one while it reduces the number of vertices that could become an endpoint through a sequence of rotations by at least $(d+1)/2$. Iterating this argument shows that $C$ has size at least $\ell \cdot (d+1)/2$, proving the result for bipartite graphs.

For non-bipartite graphs, we divide the argument into two cases based on the number of edges spanned by $C$. If $C$ spans few edges, we can proceed exactly as for bipartite graphs. If instead $C$ spans many edges, we iteratively fix endpoints at vertices that have a high degree within $C$ and show that this reduces the size of $C$ sufficiently. In both cases, we then iterate the argument to show that $C$ has size at least $\ell \cdot (d+1)/4$, as required.

We now formalise this strategy. We begin by defining the set of all vertices of the graph that could become an endpoint of a linear forest through a sequence of rotations. In the following, we will use $\End(F)$ to denote the multiset of endpoints of a linear forest $F$, where an isolated vertex of $F$ is contained twice in $\End(F)$. In particular, if $F'$ is a rotation of $F$ that rotates $v$ to $w$, then $\End(F') = (\End(F) \setminus \{v\}) \cup \{w\}$.

\begin{definition}
    Let $F$ be a linear forest of a graph $G$, and let $X \subseteq \End(F)$ be a multiset of \emph{fixed endpoints}. Then, $\cF(F,X)$ denotes the family of linear forests obtained by a sequence of rotations starting at $F$ that never rotates any endpoint of $X$. Define the set
    \[
        C(F,X) \coloneqq \bigcup_{F' \in \cF(F,X)} \End(F') \setminus X.
    \]
\end{definition}

We call $C(F,X)$ a \emph{rotation-component}. Intuitively, $C(F,X)$ consists of all vertices that could become an endpoint of $F$ through a sequence of rotations that leaves all endpoints of $X$ fixed. Note that if an endpoint $v$ is contained once in $X$ but twice in $\End(F)$, then we allow one of the endpoints at $v$ to be rotated, but not both. In particular, $C(F,X)$ might not be disjoint from $X$. Since rotations are reversible, we have $\cF(F,X) = \cF(F',X)$ for all $F' \in \cF(F,X)$, and therefore also $C(F,X) = C(F',X)$. Moreover, if $X \subseteq Y \subseteq \End(F)$, then $C(F,X) \supseteq C(F,Y)$.

A \emph{minimum linear forest} $F$ of $G$ is a linear forest with the minimum number of paths among all linear forests of $G$. We now state the key lemma of this section which asserts that for any minimum linear forest $F$, the size of the rotation-component $C(F,X)$ is always at least $(d+1)/4$ times the number of endpoints that are not fixed.

\begin{lemma}\label{prop:componentsize}
    Let $F$ be a minimum linear forest of a $d$-regular graph, and let $X \subseteq \End(F)$. Then,
    \[
        \abs{C(F,X)} \ge \frac{d+1}{4} \cdot \abs{\End(F) \setminus X}.
    \]
\end{lemma}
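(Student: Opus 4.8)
The plan is to induct on $\abs{X}$, or rather to set up a discharging-style argument on $C \coloneqq C(F,X)$ and its boundary that lets us fix one more endpoint at a time, each time losing one endpoint from $\End(F)\setminus X$ while shedding at least $(d+1)/4$ vertices from the rotation-component. The base case is $X = \End(F)$, where the claim is vacuous. So assume $\abs{\End(F)\setminus X} = \ell \ge 1$; I want to produce a pivot vertex $u$ and a neighbour $v$ of $u$ in $C$ such that, after adding $v$ to the set of fixed endpoints (replacing an endpoint of $C$ that gets rotated to $v$), the new rotation-component has size at most $\abs{C} - (d+1)/4$. Iterating this $\ell$ times drives the component size down to a nonnegative quantity, which rearranges to the stated bound.

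The core of the argument is the double-counting sketched in the paper. Let $B \coloneqq N_G(C)\setminus C$ be the boundary. By the standard P\'osa-rotation fact, every vertex of $B$ has at least one path-neighbour in $C$; and since $F$ is a minimum linear forest, no edge of $G$ joins two endpoints of distinct paths that both lie in $C$, and no endpoint of $C$ has a $G$-neighbour outside $C\cup B$ that could be rotated to — these minimality/definitional constraints are what I will turn into structural restrictions on the pivot. First I would handle the case where $C$ spans few edges internally (which always holds in the bipartite setup after restricting rotations to one side): then counting edges from $C$ to $B$ both ways — each $c\in C$ sends $d$ edges to $B$ up to its internal degree, each $b\in B$ receives at most $d$, each $c$ has at most two path-neighbours in $B$, each $b$ has at least one path-neighbour in $C$, plus the "$+1$" coming from $C$ containing at least one genuine endpoint — yields a vertex $u\in B$ with at least $(d+1)/2$ neighbours in $C$ but only one path-neighbour in $C$. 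Then the key claim is: after rotating some endpoint to a neighbour $v$ of $u$ and fixing it, no other free endpoint can ever reach any neighbour $v'$ of $u$ in $C$. This is the short case analysis: if it could, then depending on whether $v,v'$ and $u$ lie on common paths, either $v$ or $v'$ admits a rotation to a vertex outside $C$ (contradicting the definition of $C$) or the edge $uv$ or $uv'$ closes off two distinct paths (contradicting minimality) — and this is exactly where "$u$ has only one path-neighbour in $C$" is used to rule out the exceptional rotation. Hence the $\ge (d+1)/2$ neighbours of $u$ in $C$ are all removed from the component, giving a loss of $(d+1)/2 \ge (d+1)/4$.

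For the general (non-bipartite, $C$-spans-many-edges) case, the discharging is different: instead of going through the boundary, I iteratively pick a vertex $u$ of high degree *inside* $C$ and fix an endpoint at a neighbour of $u$, arguing by a similar but slightly more involved case analysis (now the pivot's neighbourhood and the fixed endpoint can interact along internal edges) that this still removes $\Omega(d)$ vertices from the component per fixed endpoint; the factor of $2$ is lost precisely here, accounting for the $(d+1)/4$ rather than $(d+1)/2$. The main obstacle, and the part I expect to require the most care, is making the "no other free endpoint can reach $N(u)$" claim fully rigorous in the non-bipartite setting: one must track how a long sequence of rotations could conspire to move a second endpoint near $u$, rule out every configuration using only the one-path-neighbour property and minimality of $F$, and ensure the accounting of which vertices leave the component is not double-charged across the $\ell$ iterations. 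Handling the bookkeeping of the multiset $\End(F)$ (isolated vertices counted twice, an endpoint fixed once but free once) is a further technical wrinkle that the $\setminus X$ in the definition of $C(F,X)$ is designed to absorb, but it needs to be threaded through the induction carefully.
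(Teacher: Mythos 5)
Your ``few internal edges'' case is essentially the paper's argument: the double count (each vertex of $C$ sends $d$ edges out minus internal degree, each boundary vertex has at least one path-neighbour in $C$, etc.) produces $u \in B_1(F,X)$ with at least $(d+1)/2$ neighbours in $C(F,X)$, and the case analysis showing that fixing one endpoint at a neighbour of $u$ empties $N_G(u)$ from the component is exactly the paper's key lemma. That part is sound.

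The gap is in your second case. You claim that when $C(F,X)$ spans many internal edges, one can ``iteratively pick a vertex $u$ of high degree inside $C$ and fix an endpoint at a neighbour of $u$'' and that ``this still removes $\Omega(d)$ vertices from the component per fixed endpoint.'' No such per-step removal statement is true, and it is not what the paper proves. After fixing an endpoint at (or near) a high-internal-degree vertex $v$, the neighbours of $v$ inside $C$ have no reason to leave the rotation-component: they can still be reached by rotation sequences that never use $v$ as a pivot. The minimality and rotation constraints only bite when a vertex of $C$ becomes adjacent to \emph{two} fixed endpoints, or is in $C_2$ and adjacent to one, etc. Accordingly, the paper's actual argument in this regime is a \emph{global} charging: running the process of fixing maximum-internal-degree vertices $v_1,\dots,v_\ell$ to exhaustion, one shows that each vertex of $C(F,X)$ can appear in $N_G(v_i)\cap C(F_i,X_i)$ for at most $0$, $1$, or $2$ indices $i$ according to whether it lies in $C_2$, $C_1$, or $C_0$ (with a further correction for vertices that end up in $\End(F_{\ell+1})\setminus X$), giving $\sum_i \abs{N_G(v_i)\cap C(F_i,X_i)} \le \abs{C_1}+2\abs{C_0} - \abs{\End(F)\setminus X}$. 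This sum is then used \emph{not} to remove vertices directly but to upper-bound $e(C(F,X))$ (assuming the component never shrank by $(d+1)/4$ per step, which would finish the proof by induction anyway), so that the double-counting inequality $e(C,B_1) > \abs{B_1}\cdot d/2$ still holds and the boundary argument of your first case applies. Your dichotomy also leaves the threshold for ``few edges'' unspecified; the bound actually needed is $2\cdot e(C) \le \tfrac{d}{2}\abs{C_1} + d\abs{C_0}$, which depends on the partition of $C$ into $C_0,C_1,C_2$ and cannot be read off from $e(C)$ alone. So the non-bipartite case of your proposal is missing its central mechanism, and the step ``fixing near a high-internal-degree vertex sheds $\Omega(d)$ vertices'' would fail as stated.
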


This result suffices to prove that every $d$-regular graph on $n$ vertices has a linear forest with at most $2 \cdot n / (d+1)$ paths.

\begin{proof}[Proof of \cref{thm:smalllinearforest}]
    Let $G$ be a $d$-regular graph on $n$ vertices, and let $F$ be a minimum linear forest of $G$. By \cref{prop:componentsize}, we have
    \[
        \abs{\End(F)} \le \frac{4}{d+1} \cdot \abs{C(F,\emptyset)} \le \frac{4}{d+1} \cdot n.
    \]
    Since the number of endpoints of a linear forest is twice the number of its paths, this implies that $F$ has at most $2 \cdot n/(d+1)$ paths.
\end{proof}

To prove \cref{prop:componentsize}, we first make the following observations about rotations that are used extensively throughout our proofs. These observations follow directly from the definition of rotations together with a small amount of case analysis as indicated in \cref{fig:rotationobservations}.
\begin{enumerate}[(R1)]
    \item Suppose that $v$ and $u$ are endpoints of $F$ and neighbours in $G$.
    \begin{enumerate}[(a),ref=(R1\alph*)]
        \item\label{obs:rotateonetoone} If $w$ is a neighbour of $u$ on its path in $F$, then we can rotate $v$ to $w$ using the pivot $u$.
        \item\label{obs:reduceonetoone} Alternatively, if $u$ and $v$ are on different paths in $F$, we can reduce the number of paths of $F$ by adding the edge $u v$ to $F$. In particular, this applies if $u$ is an isolated vertex of $F$.
    \end{enumerate}
    \item\label{obs:rotateonetotwo} Suppose that $v$ is an endpoint of $F$ which is a neighbour of $u$ in $G$. If $w$ and $w'$ are neighbours of $u$ on its path in $F$, then we can rotate $v$ to one of the two vertices $w$ or $w'$ using the pivot $u$.
    \item Suppose that $v$ and $v'$ are endpoints of $F$ which are both neighbours of $u$ in $G$. Note that we allow $v = v'$ if $v$ is an isolated vertex of $F$.
    \begin{enumerate}[(a),ref=(R3\alph*)]
        \item\label{obs:rotatetwotoone} If $w$ is a neighbour of $u$ on its path in $F$, then we can rotate one of the two endpoints $v$ or $v'$ to $w$ using the pivot $u$.
        \item\label{obs:reducetwotoone} Alternatively, if $u$ is an endpoint of $F$, we can reduce the number of paths of $F$ by adding one of the two edges $u v$ or $u v'$ to $F$.
    \end{enumerate}
\end{enumerate}

\begin{figure}[t]
    \centering
    \begin{tikzpicture}[scale=0.8,line width=1pt]
    \draw (3,2.5) -- (3,-15);
    \draw (7,2.5) -- (7,-15);
    \draw (11,2.5) -- (11,-15);
    \draw (15,2.5) -- (15,-15);
    \begin{scope}
        \node at (1,2) {(R1a)};
        \begin{scope}[graph]
            \node[label=below:$v$] (v) at (0,0) {};
            \node[label=below:$u$] (u) at (1,0) {};
            \node[label=below:$w$] (w) at (2,0) {};
            \draw (u) -- (w);
            \draw[dotted] (v) -- (u);
            \draw[snake] (-0.4,0.9) -- (v);
            \draw[snake] (2.4,0.9) -- (w);
        \end{scope}
        \node at (1,-1.5) {or};
        \begin{scope}[graph,yshift=-3.5cm]
            \node[label=below:$v$] (v) at (0,0) {};
            \node[label=below:$u$] (u) at (1,0) {};
            \node[label=below:$w$] (w) at (2,0) {};
            \draw (v) (u) -- (w);
            \draw[dotted] (v) -- (u);
            \draw[snake] (v) to[bend left=80,looseness=1.6] (w);
        \end{scope}
        \draw[->] (1,-5.1) -- (1,-5.9);
        \begin{scope}[graph,yshift=-7cm]
            \node[label=below:$v$] (v) at (0,0) {};
            \node[label=below:$u$] (u) at (1,0) {};
            \node[label=below:$w$] (w) at (2,0) {};
            \draw (v) -- (u);
            \draw[dotted] (u) -- (w);
        \end{scope}
    \end{scope}
    
    \begin{scope}[xshift=4cm]
        \node at (1,2) {(R1b)};
        \begin{scope}[graph,xshift=0.5cm]
            \node[label=below:$v$] (v) at (0,0) {};
            \node[label=below:$u$] (u) at (1,0) {};
            \draw[dotted] (v) -- (u);
            \draw[snake] (-0.4,0.9) -- (v);
            \draw[snake] (1.4,0.9) -- (u);
        \end{scope}
        \draw[->] (1,-1.6) -- (1,-2.4);
        \begin{scope}[graph,xshift=0.5cm,yshift=-3.5cm]
            \node[label=below:$v$] (v) at (0,0) {};
            \node[label=below:$u$] (u) at (1,0) {};
            \draw (v) -- (u);
        \end{scope}
    \end{scope}
    
    \begin{scope}[xshift=8cm]
        \node at (1,2) {(R2)};
        \draw[dotted] (-0.5,-8.5) -- (2.5,-8.5);
        \begin{scope}[graph]
            \node[label=below:$v$] (v) at (0,0) {};
            \node[label=below:$u$] (u) at (1,0) {};
            \node[label=right:$w$] (w) at (1.8,0.5) {};
            \node[label=right:$w'$] (wp) at (1.8,-0.5) {};
            \draw (w) -- (u) -- (wp);
            \draw[dotted] (v) -- (u);
            \draw[snake] (-0.4,0.9) -- (v);
            \draw[snake] (2.2,1.2) -- (w);
            \draw[snake] (2.2,-1.2) -- (wp);
        \end{scope}
        \node at (1,-1.5) {or};
        \begin{scope}[graph,yshift=-3.5cm]
            \node[label=below:$v$] (v) at (0,0) {};
            \node[label=below:$u$] (u) at (1,0) {};
            \node[label=right:$w$] (w) at (1.8,0.5) {};
            \node[label=right:$w'$] (wp) at (1.8,-0.5) {};
            \draw (w) -- (u) -- (wp);
            \draw[dotted] (v) -- (u);
            \draw[snake] (v) to[bend left=70,looseness=1.4] (w);
            \draw[snake] (2.2,-1.2) -- (wp);
        \end{scope}
        \draw[->] (1,-5.1) -- (1,-5.9);
        \begin{scope}[graph,yshift=-7cm]
            \node[label=below:$v$] (v) at (0,0) {};
            \node[label=below:$u$] (u) at (1,0) {};
            \node[label=right:$w$] (w) at (1.8,0.5) {};
            \node[label=right:$w'$] (wp) at (1.8,-0.5) {};
            \draw (v) -- (u) -- (wp);
            \draw[dotted] (u) -- (w);
        \end{scope}
    
        \begin{scope}[yshift=-10.5cm]
            \begin{scope}[graph]
                \node[label=above:$v$] (v) at (0,0) {};
                \node[label=above:$u$] (u) at (1,0) {};
                \node[label=right:$w$] (w) at (1.8,0.5) {};
                \node[label=right:$w'$] (wp) at (1.8,-0.5) {};
                \draw (w) -- (u) -- (wp);
                \draw[dotted] (v) -- (u);
                \draw[snake] (2.2,1.2) -- (w);
                \draw[snake] (v) to[bend right=70,looseness=1.4] (wp);
            \end{scope}
            \draw[->] (1,-1.6) -- (1,-2.4);
            \begin{scope}[graph,yshift=-3.5cm]
                \node[label=below:$v$] (v) at (0,0) {};
                \node[label=below:$u$] (u) at (1,0) {};
                \node[label=right:$w$] (w) at (1.8,0.5) {};
                \node[label=right:$w'$] (wp) at (1.8,-0.5) {};
                \draw (v) -- (u) -- (w);
                \draw[dotted] (u) -- (wp);
            \end{scope}
        \end{scope}
    \end{scope}
    
    \begin{scope}[xshift=12cm]
        \node at (1,2) {(R3a)};
        \draw[dotted] (-0.5,-8.5) -- (2.5,-8.5);
        \begin{scope}[graph]
            \node[label=left:$v$] (v) at (0.2,0.5) {};
            \node[label=left:$v'$] (vp) at (0.2,-0.5) {};
            \node[label=below:$u$] (u) at (1,0) {};
            \node[label=below:$w$] (w) at (2,0) {};
            \draw (u) -- (w);
            \draw[dotted] (v) -- (u) -- (vp);
            \draw[snake] (-0.2,1.2) -- (v);
            \draw[snake] (-0.2,-1.2) -- (vp);
            \draw[snake] (1.4,0.9) -- (u);
            \draw[snake] (2.4,0.9) -- (w);
        \end{scope}
        \node at (1,-1.5) {or};
        \begin{scope}[graph,yshift=-3.5cm]
            \node[label=left:$v$] (v) at (0.2,0.5) {};
            \node[label=left:$v'$] (vp) at (0.2,-0.5) {};
            \node[label=above:$u$] (u) at (1,0) {};
            \node[label=above:$w$] (w) at (2,0) {};
            \draw (u) -- (w);
            \draw[dotted] (v) -- (u) -- (vp);
            \draw[snake] (-0.2,1.2) -- (v);
            \draw[snake] (vp) to[bend right=100,looseness=2.5] (u);
            \draw[snake] (2.4,-0.9) -- (w);
        \end{scope}
        \draw[->] (1,-5.1) -- (1,-5.9);
        \begin{scope}[graph,yshift=-7cm]
            \node[label=left:$v$] (v) at (0.2,0.5) {};
            \node[label=left:$v'$] (vp) at (0.2,-0.5) {};
            \node[label=below:$u$] (u) at (1,0) {};
            \node[label=below:$w$] (w) at (2,0) {};
            \draw (v) -- (u);
            \draw[dotted] (vp) -- (u) -- (w);
        \end{scope}
    
        \begin{scope}[yshift=-10.5cm]
            \begin{scope}[graph]
                \node[label=left:$v$] (v) at (0.2,0.5) {};
                \node[label=left:$v'$] (vp) at (0.2,-0.5) {};
                \node[label=below:$u$] (u) at (1,0) {};
                \node[label=below:$w$] (w) at (2,0) {};
                \draw (u) -- (w);
                \draw[dotted] (v) -- (u) -- (vp);
                \draw[snake] (v) to[bend left=100,looseness=2.5] (u);
                \draw[snake] (-0.2,-1.2) -- (vp);
                \draw[snake] (2.4,0.9) -- (w);
            \end{scope}
            \draw[->] (1,-1.6) -- (1,-2.4);
            \begin{scope}[graph,yshift=-3.5cm]
                \node[label=left:$v$] (v) at (0.2,0.5) {};
                \node[label=left:$v'$] (vp) at (0.2,-0.5) {};
                \node[label=below:$u$] (u) at (1,0) {};
                \node[label=below:$w$] (w) at (2,0) {};
                \draw (vp) -- (u);
                \draw[dotted] (v) -- (u) -- (w);
            \end{scope}
        \end{scope}
    \end{scope}
    
    \begin{scope}[xshift=16cm]
        \node at (1,2) {(R3b)};
        \draw[dotted] (-0.5,-8.5) -- (2.5,-8.5);
        \begin{scope}[graph,xshift=0.5cm]
            \node[label=left:$v$] (v) at (0.2,0.5) {};
            \node[label=left:$v'$] (vp) at (0.2,-0.5) {};
            \node[label=below:$u$] (u) at (1,0) {};
            \draw[dotted] (v) -- (u) -- (vp);
            \draw[snake] (-0.2,1.2) -- (v);
            \draw[snake] (-0.2,-1.2) -- (vp);
            \draw[snake] (2,0) -- (u);
        \end{scope}
        \node at (1,-1.5) {or};
        \begin{scope}[graph,xshift=0.5cm,yshift=-3.5cm]
            \node[label=left:$v$] (v) at (0.2,0.5) {};
            \node[label=left:$v'$] (vp) at (0.2,-0.5) {};
            \node[label=above:$u$] (u) at (1,0) {};
            \draw[dotted] (v) -- (u) -- (vp);
            \draw[snake] (-0.2,1.2) -- (v);
            \draw[snake] (vp) to[bend right=100,looseness=2.5] (u);
        \end{scope}
        \draw[->] (1,-5.1) -- (1,-5.9);
        \begin{scope}[graph,xshift=0.5cm,yshift=-7cm]
            \node[label=left:$v$] (v) at (0.2,0.5) {};
            \node[label=left:$v'$] (vp) at (0.2,-0.5) {};
            \node[label=below:$u$] (u) at (1,0) {};
            \draw (v) -- (u);
            \draw[dotted] (vp) -- (u);
        \end{scope}
    
        \begin{scope}[yshift=-10.5cm]
            \begin{scope}[graph,xshift=0.5cm]
                \node[label=left:$v$] (v) at (0.2,0.5) {};
                \node[label=left:$v'$] (vp) at (0.2,-0.5) {};
                \node[label=below:$u$] (u) at (1,0) {};
                \draw[dotted] (v) -- (u) -- (vp);
                \draw[snake] (v) to[bend left=100,looseness=2.5] (u);
                \draw[snake] (-0.2,-1.2) -- (vp);
            \end{scope}
            \draw[->] (1,-1.6) -- (1,-2.4);
            \begin{scope}[graph,xshift=0.5cm,yshift=-3.5cm]
                \node[label=left:$v$] (v) at (0.2,0.5) {};
                \node[label=left:$v'$] (vp) at (0.2,-0.5) {};
                \node[label=below:$u$] (u) at (1,0) {};
                \draw (vp) -- (u);
                \draw[dotted] (v) -- (u);
            \end{scope}
        \end{scope}
    \end{scope}
\end{tikzpicture}
    \caption{The case analysis for the observations about possible rotations of linear forests.}
    \label{fig:rotationobservations}
\end{figure}
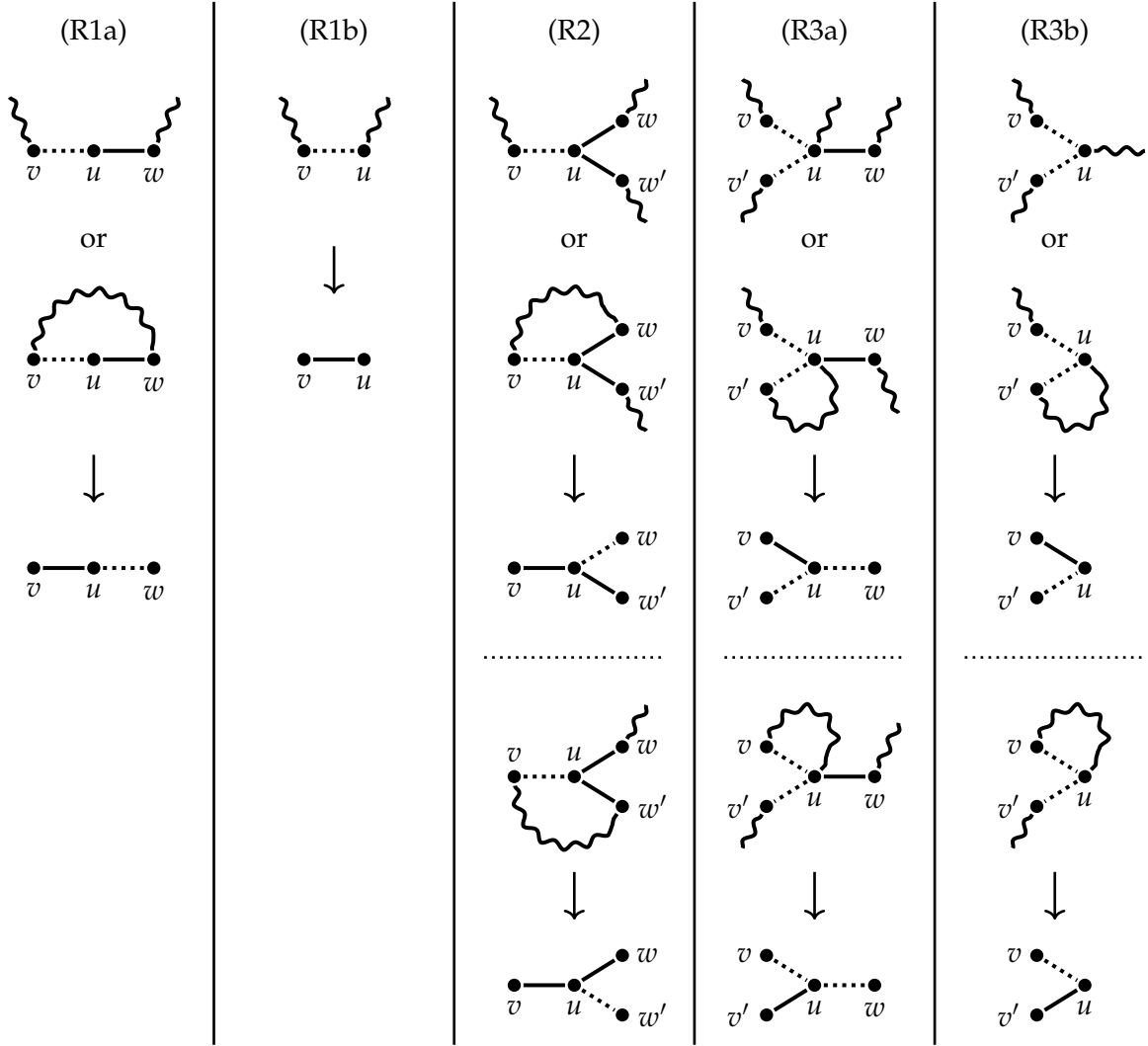

Now, consider the \emph{boundary} $B(F,X) \coloneqq N_G(C(F,X)) \setminus C(F,X)$ of $C(F,X)$. Since $C(F,X) = C(F',X)$ for all $F' \in \cF(F,X)$, we also have $B(F,X) = B(F',X)$ for all $F' \in \cF(F,X)$. It is useful to classify each vertex of $C(F,X)$ and $B(F,X)$ based on how many neighbours on its path in $F$ are contained in the opposite set. So, define $C_i(F,X) \coloneqq \{v \in C(F,X) : \abs{N_F(v) \cap B(F,X)} = i\}$ and $B_i(F,X) \coloneqq \{v \in B(F,X) : \abs{N_F(v) \cap C(F,X)} = i\}$. Since every vertex is incident to at most two edges of $F$, we may assume that $i \in \{0, 1, 2\}$.

We start by proving some basic properties about these sets. The second of these properties, namely that $B_0(F,X) = \emptyset$, is a well-known fact for P\'{o}sa rotations.

\begin{lemma}\label{lem:b0empty}
    Let $F$ be a linear forest and let $X \subseteq \End(F)$. Then, for all $F' \in \cF(F,X)$ and $i \in \{0, 1, 2\}$ we have $B_i(F,X) = B_i(F',X)$. Moreover, if $F$ is a minimum linear forest, then $B_0(F,X) = \emptyset$.
\end{lemma}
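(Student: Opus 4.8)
The plan is to treat the two assertions separately: the first (invariance of the sets $B_i$ under rotations within $\cF(F,X)$) is essentially bookkeeping, while the second ($B_0 = \emptyset$ for minimum $F$) is the classical P\'osa-rotation fact and carries the real content.

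For the invariance, I would induct on the number of rotations needed to pass from $F$ to $F'$, so it suffices to treat a single rotation, say one sending the old endpoint $v$ to the new endpoint $w$ with pivot $u$. We already know that $C(F,X) = C(F',X)$ and hence $B(F,X) = B(F',X)$; writing $C$ and $B$ for these common sets, it remains only to check that the partition $B = B_0 \cup B_1 \cup B_2$ is unchanged. The forests $F$ and $F'$ differ only in that the edge $uw$ is deleted and the edge $uv$ is added, so $N_F$ and $N_{F'}$ agree outside $\{u,v,w\}$. Since $v$ is an endpoint of $F$ and $w$ an endpoint of $F'$, both lie in $C$, hence not in $B$, so the only boundary vertex whose $F$-path-neighbourhood might change is $u$, and only when $u \in B$. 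For that vertex $N_{F'}(u) = (N_F(u) \setminus \{w\}) \cup \{v\}$ with $w \in N_F(u) \cap C$ and $v \in C \setminus N_F(u)$ (the edge $uv$ is being added, so it was absent from $F$), whence $\abs{N_{F'}(u) \cap C} = \abs{N_F(u) \cap C}$ and $u$ stays in the same class. (A degenerate rotation with $w = v$ leaves $F$ unchanged and needs no argument.)

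For the emptiness of $B_0$ when $F$ is minimum, I would argue by contradiction: assume $u \in B_0(F,X)$, and note that the invariance just proved lets me work with any $F' \in \cF(F,X)$ in place of $F$. Since $u \in N_G(C)$, fix a $G$-neighbour $v \in C$ of $u$ and, using that $C = C(F',X)$ for every $F' \in \cF(F,X)$, choose $F' \in \cF(F,X)$ in which $v$ is a non-fixed endpoint; it then suffices to produce a path-neighbour of $u$ in $F'$ lying in $C$, since this contradicts $u \in B_0(F',X) = B_0(F,X)$. I would split according to whether $u$ and $v$ lie on the same path of $F'$. If they do, let $w$ be the $F'$-path-neighbour of $u$ closest to $v$; then rotating $v$ onto $w$ with pivot $u$ is a legal rotation in $\cF(F,X)$, so $w \in C$, and $w$ is a path-neighbour of $u$. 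If they lie on different paths, then $u$ cannot be an endpoint of $F'$, since otherwise the path-reduction (R1b) would produce a linear forest with fewer paths than $F$, contradicting minimality (rotations preserve the number of paths); hence $u$ has two $F'$-path-neighbours, and by (R2) one of them is reachable by rotating $v$ with pivot $u$, again placing a path-neighbour of $u$ in $C$. In either case $N_{F'}(u) \cap C \neq \emptyset$, the desired contradiction.

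The step demanding the most care is the interaction with the multiset conventions for $\End(\cdot)$: one has to verify carefully that the rotated endpoints $v$ and $w$ genuinely belong to $C(F,X)$ (the delicate situations being when a rotated endpoint becomes an isolated vertex, or coincides as a vertex with a fixed endpoint), that every rotation used in the second part moves no endpoint of $X$, and that the degenerate configurations there — $u$ already a path-neighbour of $v$, or $u$ the opposite endpoint of $v$'s path — still yield a path-neighbour of $u$ in $C$. None of this is deep, but it must all be checked against the definitions; the minimality hypothesis enters exactly once, to forbid the path-reduction in the different-paths subcase with $u$ an endpoint.
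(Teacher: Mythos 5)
Your proposal is correct and follows essentially the same route as the paper: part one reduces to a single rotation and observes that only the pivot's path-neighbourhood in the boundary can change, where $w$ is swapped for $v$ with both in $C(F,X)$; part two picks a neighbour $v\in C(F,X)$ of a putative $u\in B_0(F,X)$, realises it as a non-fixed endpoint of some $F'$, and derives a contradiction from the rotation observations. Your case split in part two (same path versus different paths, with minimality invoked when $u$ is an endpoint on a different path) is organised slightly differently from the paper's (which splits on the degree of $u$ in $F'$ and invokes minimality only when $u$ is isolated), but the two analyses are equivalent and equally valid.
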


\begin{proof}
    Note that $F'$ is obtained by a sequence of rotations starting at $F$ that never rotates any endpoint of $X$. It suffices to consider a single rotation of this sequence, so we may assume that $F'$ is a rotation of $F$ that rotates an old endpoint $v$ to a new endpoint $w$, using a pivot $u$. Note that $v, w \in C(F,X) = C(F',X)$. In particular, $\abs{N_F(u) \cap C(F,X)} = \abs{N_{F'}(u) \cap C(F',X)}$, and for each vertex in $B(F,X)$ apart from $u$, the edges of $F$ and $F'$ incident to that vertex are exactly the same. This shows that $B_i(F,X) = B_i(F',X)$.

    Now, let $F$ be a minimum linear forest. We want to show that $B_0(F,X) = \emptyset$, so suppose for a contradiction that there exists a vertex $u \in B_0(F,X)$. Let $v \in N_G(u) \cap C(F,X)$ be any neighbour of $u$ in $C(F,X)$. Since $v \in C(F,X)$, there exists a linear forest $F' \in \cF(F,X)$ with $X \cup \{v\} \subseteq \End(F')$. Because $F'$ has the same number of paths as $F$, $F'$ is a minimum linear forest, and we know that $u \in B_0(F,X) = B_0(F',X)$.

    If $u$ is an isolated vertex of $F'$, then by \labelcref{obs:reduceonetoone} we could reduce the number of paths of $F'$ by adding the edge $u v$, contradicting that $F'$ is a minimum linear forest. If $u$ is an endpoint of $F'$ but has a neighbour $w \in N_{F'}(u)$ on its path in $F'$, then by \labelcref{obs:rotateonetoone} we could rotate $v$ to $w$ using the pivot $u$. But this would imply that $w \in C(F',X)$, contradicting $u \in B_0(F',X)$. Finally, if $u$ has two neighbours on its path in $F'$, then by \labelcref{obs:rotateonetotwo} there exists at least one neighbour $w \in N_{F'}(u)$ such that we could rotate $v$ to $w$ using the pivot $u$, again contradicting $w \notin C(F',X)$. In all cases we get a contradiction, and so $B_0(F,X) = \emptyset$.
\end{proof}

We now show that we can reduce the size of $C(F,X)$ by fixing an endpoint at a vertex in $C(F,X)$ that has a neighbour in $B_1(F,X)$ which itself has many neighbours in $C(F,X)$.

\begin{lemma}\label{lem:fixneighbourofb1}
    Let $F$ be a minimum linear forest and let $X \subseteq \End(F)$. Let $v \in \End(F) \setminus X$ and suppose that $u \in N_G(v) \cap B_1(F,X)$. Then, $\abs{C(F,X \cup \{v\})} \le \abs{C(F,X)} - \abs{N_G(u) \cap C(F,X)}$.
\end{lemma}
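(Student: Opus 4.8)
The plan is to show that $C(F,X\cup\{v\})$ is disjoint from $N_G(u)$; together with the monotonicity $C(F,X\cup\{v\}) \subseteq C(F,X)$ noted above, this gives $C(F,X\cup\{v\}) \subseteq C(F,X) \setminus N_G(u)$, and comparing sizes yields the stated inequality. So it suffices to rule out a vertex of $N_G(u)$ in $C(F,X\cup\{v\})$, and I would do this by contradiction.

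Write $X' \coloneqq X \cup \{v\}$ and suppose $v' \in N_G(u) \cap C(F,X')$; then there is $F'' \in \cF(F,X')$ with $v'$ a non-frozen endpoint of $F''$. I would collect the relevant facts about $F''$: it is again a minimum linear forest; $v' \in \End(F'') \setminus X \subseteq C(F,X)$ since $X \subseteq X'$; the fixed endpoint $v$ is still an endpoint of $F''$ with $v \in N_G(u)$ and $v \in \End(F) \setminus X \subseteq C(F,X)$; and, by \cref{lem:b0empty}, $u \in B_1(F,X) = B_1(F'',X)$, so $u \notin C(F,X)$, $u$ has exactly one path-neighbour in $C(F,X)$, and in particular $u$ is not isolated in $F''$. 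The key point is that, although $v$ must not be rotated within $\cF(F,X')$, it may be rotated within $\cF(F,X)$; so to finish it is enough to produce a linear forest in $\cF(F,X)$ that either has fewer paths than $F$ or has an endpoint outside $C(F,X)$, and for this we are free to rotate $v$ or $v'$ (using non-frozen occurrences, so as to stay inside $\cF(F,X)$) or to add an edge at $u$.

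I would then split on the role of $u$ in $F''$. If $u$ is an endpoint of $F''$, then as $v$ and $v'$ are endpoints of $F''$ different from $u$, they cannot both lie on $u$'s path, so adding one of $uv$, $uv'$ merges two paths and \labelcref{obs:reducetwotoone} produces a linear forest with fewer paths, contradicting minimality. If $u$ is internal in $F''$, let $w, w'$ be its two path-neighbours; since $u \in B_1(F'',X)$ exactly one of them, say $w'$, lies outside $C(F,X)$, and then $w' \ne v$ and $w' \ne v'$ because $v, v' \in C(F,X)$. By \labelcref{obs:rotatetwotoone} we can rotate one of $v$, $v'$ to $w'$ using the pivot $u$; the resulting linear forest $\hat F$ lies in $\cF(F,X)$, is minimum, and has $w'$ as an endpoint. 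This occurrence of $w'$ is not accounted for by $X$: if $w' \notin X$ this is clear, and if $w' \in X$ then $w'$ was a non-isolated endpoint of $F''$ whose sole edge was $uw'$, so the rotation isolated $w'$ and it now appears twice among the endpoints of $\hat F$. Either way $w' \in \End(\hat F) \setminus X \subseteq C(F,X)$, contradicting $w' \notin C(F,X)$.

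The substantive part is the internal-pivot case, and the delicate point there is checking that the rotation towards $w'$ is a legal rotation that still manufactures a genuinely new endpoint, even when $w'$ is itself a frozen endpoint of $X$; this is precisely where we use the hypothesis $u \in B_1(F,X)$, that is, that $u$ has exactly one path-neighbour in the current rotation-component, so that there is a second path-neighbour of $u$ to aim a rotation at. The endpoint-pivot case and the reduction to disjointness are routine in comparison.
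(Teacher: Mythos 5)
Your proof is correct and follows essentially the same route as the paper's: show $N_G(u)\cap C(F,X\cup\{v\})=\emptyset$ by contradiction, splitting on whether the pivot $u$ is an endpoint (apply \labelcref{obs:reducetwotoone} against minimality) or internal (apply \labelcref{obs:rotatetwotoone} to reach the path-neighbour of $u$ outside $C(F,X)$), then conclude via $C(F,X\cup\{v\})\subseteq C(F,X)$. Your extra care about the case $w'\in X$ (using the multiset convention to see that the rotation still produces an occurrence of $w'$ in $\End(\hat F)\setminus X$) is a valid refinement of a point the paper leaves implicit.
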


\begin{proof}
    We claim that $N_G(u) \cap C(F,X \cup \{v\}) = \emptyset$. Suppose for a contradiction that there exists a vertex $v' \in N_G(u) \cap C(F,X \cup \{v\})$. Since $v' \in C(F,X \cup \{v\})$, there exists a linear forest $F' \in \cF(F,X \cup \{v\})$ with $X \cup \{v, v'\} \subseteq \End(F')$. Note that $F'$ is a minimum linear forest. By \cref{lem:b0empty} we know that $u \in B_1(F,X) = B_1(F',X)$.

    If $u$ is an endpoint of $F'$, then by \labelcref{obs:reducetwotoone} we could reduce the number of paths of $F'$ by adding one of the two edges $u v$ or $u v'$, contradicting that $F'$ is a minimum linear forest. Otherwise, $u$ has two neighbours on its path in $F'$. Since $u \in B_1(F',X)$, there must be a neighbour $w \in N_{F'}(u)$ that is not contained in $C(F',X)$. But then by \labelcref{obs:rotatetwotoone} we could rotate one of the two endpoints $v$ or $v'$ to $w$ using the pivot $u$, contradicting $w \notin C(F',X)$. In both cases we get a contradiction, and so $N_G(u) \cap C(F,X \cup \{v\}) = \emptyset$, as claimed. Since $C(F,X \cup \{v\}) \subseteq C(F,X)$, this implies that $\abs{C(F,X \cup \{v\})} \le \abs{C(F,X)} - \abs{N_G(u) \cap C(F,X)}$.
\end{proof}

To use this result to prove a good lower bound on $\abs{C(F,X)}$, we need to find a vertex in $B_1(F,X)$ with many neighbours in $C(F,X)$. This is easy if there are many edges between $C(F,X)$ and $B_1(F,X)$. We will use the following double counting argument to provide a lower bound on the number of such edges. Here, for $A, B \subseteq V(G)$, we denote the number of edges of $G$ connecting a vertex in $A$ to a vertex in $B$ by $e(A,B)$, and we write $e(A) \coloneqq e(A,A)$.

\begin{lemma}\label{lem:doublecount}
    Let $F$ be a minimum linear forest of a $d$-regular graph $G$, and let $X \subseteq \End(F)$. Then,
    \[
        e(C(F,X),B_1(F,X)) \ge \frac{d}{2} \cdot \abs{B_1(F,X)} + \frac{d}{2} \cdot \abs{C_1(F,X)} + d \cdot \abs{C_0(F,X)} - 2 \cdot e(C(F,X)).
    \]
\end{lemma}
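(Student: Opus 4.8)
The plan is to prove the inequality by a double-counting argument that plays off edges of $G$ against edges of $F$ between $C \coloneqq C(F,X)$ and its boundary $B \coloneqq B(F,X)$; throughout I abbreviate $C_i \coloneqq C_i(F,X)$ and $B_i \coloneqq B_i(F,X)$. First, since $B = N_G(C) \setminus C$, every $G$-edge leaving $C$ has its other endpoint in $B$, so summing degrees over $C$ and using $d$-regularity gives $d\abs{C} = 2e(C) + e(C,B)$, that is, $e(C,B) = d\abs{C} - 2e(C)$. By \cref{lem:b0empty} we have $B_0(F,X) = \emptyset$, hence $B = B_1 \cup B_2$ and $e(C,B_1) = d\abs{C} - 2e(C) - e(C,B_2)$, so it remains to bound $e(C,B_2)$ from above.

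To do this I would count the edges of $F$ between $C$ and $B$ in two ways. From the $C$-side this count is $\abs{C_1} + 2\abs{C_2}$ (a vertex of $C_i$ contributes exactly $i$ such edges), and from the $B$-side it is $\abs{B_1} + 2\abs{B_2}$ (again using $B_0 = \emptyset$); equating the two yields $2\abs{B_2} = \abs{C_1} + 2\abs{C_2} - \abs{B_1}$. Since every vertex of $B_2$ has exactly $d$ neighbours in $G$, we trivially have $e(C,B_2) \le d\abs{B_2}$, which together with the previous identity gives $e(C,B_2) \le \tfrac{d}{2}\abs{C_1} + d\abs{C_2} - \tfrac{d}{2}\abs{B_1}$.

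Substituting this into $e(C,B_1) = d\abs{C} - 2e(C) - e(C,B_2)$ and using $\abs{C} = \abs{C_0} + \abs{C_1} + \abs{C_2}$ makes the $C_1$- and $C_2$-terms collapse and leaves exactly
\[
    e(C,B_1) \ge d\abs{C_0} + \tfrac{d}{2}\abs{C_1} + \tfrac{d}{2}\abs{B_1} - 2e(C),
\]
as claimed. I do not expect any genuine obstacle here: the only external inputs are $d$-regularity and the fact $B_0 = \emptyset$ from \cref{lem:b0empty}, and the only place where slack is introduced is the crude estimate $e(C,B_2) \le d\abs{B_2}$. The one point that needs a moment's care is that the two ways of counting $F$-edges between $C$ and $B$ really do agree, which holds because each vertex is incident to at most two edges of $F$, so $C_0, C_1, C_2$ and $B_0, B_1, B_2$ genuinely partition $C$ and $B$.
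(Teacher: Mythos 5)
Your proof is correct and is essentially identical to the paper's: both count the $F$-edges between $C$ and $B$ from each side to get $\abs{C_1} + 2\abs{C_2} = \abs{B_1} + 2\abs{B_2}$, use $d$-regularity and $B_0(F,X) = \emptyset$ from \cref{lem:b0empty} to write $e(C,B) = d\abs{C} - 2e(C)$, and discard the $B_2$-incident edges via the crude bound $e(C,B_2) \le d\abs{B_2}$. No issues.
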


\begin{proof}
    By double counting the number of edges of $F$ between $C(F,X)$ and $B(F,X)$, we get
    \[
        \abs{C_1(F,X)} + 2 \cdot \abs{C_2(F,X)} = \abs{B_1(F,X)} + 2 \cdot \abs{B_2(F,X)}.
    \]
    Note that the left side of this equation is the same as $2 \cdot \abs{C(F,X)} - \abs{C_1(F,X)} - 2 \cdot \abs{C_0(F,X)}$. The total number of edges of $G$ between $C(F,X)$ and $B(F,X)$ is $d \cdot \abs{C(F,X)} - 2 \cdot e(C(F,X))$. Because at most $d \cdot \abs{B_2(F,X)}$ of these edges are incident with $B_2(F,X)$ and we know that $B_0(F,X) = \emptyset$ by \cref{lem:b0empty}, it follows that
    \begin{align*}
        e(C(F,X),B_1(F,X)) & \ge d \cdot \abs{C(F,X)} - 2 \cdot e(C(F,X)) - d \cdot \abs{B_2(F,X)} \\
        & = \frac{d}{2} \cdot \abs{B_1(F,X)} + \frac{d}{2} \cdot \abs{C_1(F,X)} + d \cdot \abs{C_0(F,X)} - 2 \cdot e(C(F,X)). \qedhere
    \end{align*}
\end{proof}

We now first prove that bipartite graphs have small linear forests. In that setting, observe that if we fix all endpoints that are contained in one part of the bipartition, then $C(F,X)$ will be contained in the other part of the bipartition. This implies that $e(C(F,X)) = 0$. As a result, \cref{lem:doublecount} shows that there exists a vertex in $B_1(F,X)$ with at least $(d+1)/2$ neighbours in $C(F,X)$. This allows us to apply \cref{lem:fixneighbourofb1} and use induction to complete the proof.

\begin{theorem}\label{thm:smalllinearforestbipartite}
    Every $d$-regular $n$-vertex bipartite graph has a spanning linear forest with at most $n / (d+1)$ paths.
\end{theorem}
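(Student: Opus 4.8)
The plan is to carry out the strategy sketched just before the statement. Fix a minimum linear forest $F$ of the bipartite graph $G$ with parts $V_1$ and $V_2$; since $G$ is $d$-regular we have $\abs{V_1} = \abs{V_2} = n/2$, and we may assume $d \ge 1$, the case $d = 0$ being trivial. Everything reduces to the following claim, which I would prove by induction on $m \coloneqq \abs{\End(F) \setminus X}$: for every minimum linear forest $F$ of $G$, every $i \in \{1,2\}$, and every $X \subseteq \End(F)$ with $\End(F) \cap V_i \subseteq X$, one has $\abs{C(F,X)} \ge \tfrac{d+1}{2} \cdot \abs{\End(F) \setminus X}$. Given this, applying it with $i = 2$ and $X = \End(F) \cap V_2$ yields $\tfrac{d+1}{2} \abs{\End(F) \cap V_1} \le \abs{C(F,X)} \le \abs{V_1} = n/2$, and symmetrically with $i = 1$ and $X = \End(F) \cap V_1$; adding the two inequalities gives $\abs{\End(F)} \le 2n/(d+1)$, so $F$ has at most $n/(d+1)$ paths.

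The structural point that makes the bipartite case manageable is that, because all $V_i$-endpoints of $F$ are fixed, every endpoint that is ever produced and is not in $X$ must lie in $V_{3-i}$: a rotation sends a free old endpoint $v$ (which lies in $V_{3-i}$ by induction) through a pivot $u \in N_G(v) \subseteq V_i$ to a path-neighbour of $u$, which lies back in $V_{3-i}$; a one-line induction on the length of the rotation sequence makes this precise. In particular $C(F,X) \subseteq V_{3-i}$, so $e(C(F,X)) = 0$, and also $\End(F') \cap V_i \subseteq X$ for every $F' \in \cF(F,X)$, which is exactly what is needed to keep the hypothesis of the claim intact after we fix one more endpoint.

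For the inductive step, assume $m \ge 1$. Then $C(F,X)$ contains a free endpoint of $F$, and any endpoint of $F$ has at most one path-neighbour, so it lies in $C_0(F,X) \cup C_1(F,X)$; hence $\abs{C_0(F,X)} + \abs{C_1(F,X)} \ge 1$. Feeding $e(C(F,X)) = 0$ into \cref{lem:doublecount} now gives $e(C(F,X), B_1(F,X)) \ge \tfrac{d}{2} \cdot (\abs{B_1(F,X)} + 1)$; in particular $B_1(F,X) \neq \emptyset$, and averaging over $B_1(F,X)$ yields a vertex $u$ with more than $d/2$ neighbours in $C(F,X)$, hence, this count being an integer, at least $(d+1)/2$ of them. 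Choose $v' \in N_G(u) \cap C(F,X)$ and a linear forest $F' \in \cF(F,X)$ with $v' \in \End(F') \setminus X$; by reversibility of rotations, $F'$ is again a minimum linear forest with $C(F',X) = C(F,X)$ and $B_1(F',X) = B_1(F,X)$, so \cref{lem:fixneighbourofb1} applied to $F'$, $v'$, $u$ gives $\abs{C(F', X \cup \{v'\})} \le \abs{C(F,X)} - (d+1)/2$. Setting $X' \coloneqq X \cup \{v'\}$, we have $\End(F') \cap V_i \subseteq X'$ and $\abs{\End(F') \setminus X'} = m - 1$ (rotations preserve the number of endpoints and never move fixed ones), so the induction hypothesis gives $\abs{C(F',X')} \ge \tfrac{d+1}{2}(m-1)$; combining these closes the induction.

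I expect the main obstacle to be not any single estimate — the decisive inequalities are already packaged in \cref{lem:doublecount,lem:fixneighbourofb1} — but the bookkeeping that keeps the induction honest: one must check that passing from $(F,X)$ to $(F', X \cup \{v'\})$ preserves all the hypotheses, above all the invariant $\End(F') \cap V_i \subseteq X$ that forces $e(C(F',X')) = 0$ at the next step, and that the quantity $\abs{\End(F)\setminus X}$ genuinely decreases by one. A little additional care is needed with the multiset conventions for $\End(F)$ around isolated vertices and with the fact that $C(F,X)$ need not be disjoint from $X$, but these are routine given \cref{lem:b0empty,lem:fixneighbourofb1,lem:doublecount}.
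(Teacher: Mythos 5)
Your argument is correct and follows essentially the same route as the paper's: the same induction on the number of non-fixed endpoints, the same use of bipartiteness to get $e(C(F,X))=0$, the same application of \cref{lem:doublecount} to locate a vertex of $B_1(F,X)$ with at least $(d+1)/2$ neighbours in $C(F,X)$, and the same use of \cref{lem:fixneighbourofb1} to close the induction before summing over the two parts. The bookkeeping points you flag (preservation of the invariant $\End(F')\cap V_i\subseteq X$, minimality of $F'$, and the multiset conventions) are exactly the ones the paper handles, and they go through as you describe.
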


\begin{proof}
    Let $G$ be a $d$-regular bipartite graph on $n$ vertices, and let $U$ be one part of its bipartition. We claim that if $F$ is a minimum linear forest of $G$ and $X \subseteq \End(F)$ satisfies $\End(F) \setminus X \subseteq U$, then $C(F,X) \subseteq U$ and $\abs{C(F,X)} \ge \abs{\End(F) \setminus X} \cdot (d+1)/2$. We prove the claim by induction on $\ell \coloneqq \abs{\End(F) \setminus X}$. The claim clearly holds for $\ell = 0$, so suppose that $\ell > 0$.

    Observe that if a rotation of $F$ rotates an old endpoint $v \in U$ to a new endpoint $w$, then $w$ is at distance two from $v$. Since $G$ is bipartite, it follows that $w \in U$. This shows that $C(F,X) \subseteq U$. In particular, $e(C(F,X)) = 0$. Since $\emptyset \neq \End(F) \setminus X \subseteq C_1(F,X) \cup C_0(F,X)$, this implies by \cref{lem:doublecount} that $e(C(F,X),B_1(F,X)) > \abs{B_1(F,X)} \cdot d/2$. In particular, there exists a vertex $u \in B_1(F,X)$ with $\abs{N_G(u) \cap C(F,X)} \ge (d+1)/2$.

    Let $v \in N_G(u) \cap C(F,X)$ be arbitrary, let $X' \coloneqq X \cup \{v\}$, and let $F' \in \cF(F,X)$ be any linear forest with $X' \subseteq \End(F')$. Using the fact that $C(F,X) = C(F',X)$, \cref{lem:fixneighbourofb1} implies that
    \[
        \abs{C(F,X)} = \abs{C(F',X)} \ge \abs{C(F',X')} + \abs{N_G(u) \cap C(F',X)} \ge \abs{C(F',X')} + \frac{d+1}{2}.
    \]
    By the induction hypothesis, it then follows that
    \[
        \abs{C(F,X)} \ge \abs{C(F',X')} + \frac{d+1}{2} \ge \frac{d+1}{2} \cdot \abs{\End(F') \setminus X'} + \frac{d+1}{2} = \frac{d+1}{2} \cdot \abs{\End(F) \setminus X}.
    \]
    This proves the claim. Finally, if $V(G) = V \cup W$ is the bipartition of $G$, the claim implies that
    \begin{align*}
        \abs{\End(F)} & = \abs{\End(F) \setminus (\End(F) \cap V)} + \abs{\End(F) \setminus (\End(F) \cap W)} \\
        & \le \frac{2}{d+1} \cdot \abs{C(F,\End(F) \cap V)} + \frac{2}{d+1} \cdot \abs{C(F,\End(F) \cap W)} \le \frac{2}{d+1} \cdot n.
    \end{align*}
    Since the number of endpoints of a linear forest is twice the number of its paths, this implies that $F$ has at most $n/(d+1)$ paths.
\end{proof}

In non-bipartite graphs, we will define an iterative process that fixes endpoints of $F$ one by one at vertices that have a high degree within $C(F,X)$. If this reduces the size of $C(F,X)$ sufficiently, we are done by induction. Otherwise, we show that $C(F,X)$ spans only few edges, and so we can finish the proof as in the bipartite case.

\begin{proof}[Proof of \cref{prop:componentsize}]
    Let $G$ be a $d$-regular graph, let $F$ be a minimum linear forest of $G$, and let $X \subseteq \End(F)$. We proceed by induction on $\ell \coloneqq \abs{\End(F) \setminus X}$. The result clearly holds for $\ell = 0$, so suppose that $\ell > 0$.
    
    Consider the following iterative process. Let $X_1 \coloneqq X$ and let $F_1 \coloneqq F$. Then, given $X_i$ and $F_i$, let $\Delta_i \coloneqq \Delta(C(F_i,X_i))$ be the maximum degree of the subgraph induced by $C(F_i,X_i)$, and choose any vertex $v_i \in C(F_i,X_i)$ with degree $\Delta_i$ in $C(F_i,X_i)$. Define $X_{i+1} \coloneqq X_i \cup \{v_i\}$ and let $F_{i+1} \in \cF(F_i,X_i)$ be a linear forest with $X_{i+1} \subseteq \End(F_{i+1})$. This process gives a chain of rotation-components
    \[
        C(F,X) = C(F_1,X_1) \supseteq C(F_2,X_2) \supseteq C(F_3,X_3) \supseteq \dots \supseteq C(F_{\ell+1},X_{\ell+1}) = \emptyset.
    \]
    Since the maximum degree of the subgraph induced by $C(F_i,X_i)$ is $\Delta_i$, this implies that
    \begin{equation}
        e(C(F,X)) \le \sum_{i = 1}^\ell \abs{C(F_i,X_i) \setminus C(F_{i+1},X_{i+1})} \cdot \Delta_i. \label{eq:edgessumdelta}
    \end{equation}
    If $\abs{C(F_{i+1},X_{i+1})} \le \abs{C(F,X)} - i \cdot (d+1)/4$, then we know by the induction hypothesis that
    \begin{align*}
        \abs{C(F,X)} & \ge \frac{d+1}{4} \cdot i + \abs{C(F_{i+1},X_{i+1})} \\
        & \ge \frac{d+1}{4} \cdot i + \frac{d+1}{4} \cdot \abs{\End(F_{i+1}) \setminus X_{i+1}} = \frac{d+1}{4} \cdot \abs{\End(F) \setminus X}.
    \end{align*}
    So, we may assume that $\abs{C(F_{i+1},X_{i+1})} > \abs{C(F,X)} - i \cdot (d+1)/4$. Since $\Delta_1 \ge \dots \ge \Delta_k$, this implies that
    \begin{equation}
        \sum_{i = 1}^\ell \abs{C(F_i,X_i) \setminus C(F_{i+1},X_{i+1})} \cdot \Delta_i < \sum_{i = 1}^\ell \frac{d+1}{4} \cdot \Delta_i = \frac{d+1}{4} \sum_{i=1}^\ell \abs{N_G(v_i) \cap C(F_i,X_i)}. \label{eq:sumdeltasizeneighbours}
    \end{equation}
    We claim that
    \begin{equation}
        \abs{\End(F_{\ell+1}) \setminus X} + \sum_{i=1}^\ell \abs{N_G(v_i) \cap C(F_i,X_i)} \le \abs{C_1(F_{\ell+1},X)} + 2 \cdot \abs{C_0(F_{\ell+1},X)}. \label{eq:sizeneighbourssizecomponent}
    \end{equation}
    To see this, observe that every vertex on the left side of the inequality is contained in $C(F,X) = C(F_{\ell+1},X)$, and consider the following cases:
    \begin{enumerate}[(a)]
        \item Suppose that $u \in C_2(F_{\ell+1},X)$. Clearly $u \notin \End(F_{\ell+1}) \setminus X$ since $u$ has two neighbours on its path in $F_{\ell+1}$. Moreover, we claim that $u \notin N_G(v_i) \cap C(F_i,X_i)$ for all $i$. Indeed, if $u \in N_G(v_i)$ for some $i$, then by \labelcref{obs:rotateonetotwo} there exists at least one neighbour $w \in N_{F_{\ell+1}}(u)$ such that we could rotate $v_i$ to $w$ using the pivot $u$. This would imply $w \in C(F_{\ell+1},X)$, contradicting $u \in C_2(F_{\ell+1},X)$.
        \item If $u \in C_1(F_{\ell+1},X)$, we claim that either $u \in \End(F_{\ell+1}) \setminus X$ and $u \notin N_G(v_i) \cap C(F_i,X_i)$ for all $i$, or $u \notin \End(F_{\ell+1}) \setminus X$ and $u \in N_G(v_i) \cap C(F_i,X_i)$ for up to one value of $i$. Indeed, let $w \in N_{F_{\ell+1}}(u)$ be the unique neighbour of $u$ in $B(F_{\ell+1},X)$. If $u \in \End(F_{\ell+1}) \setminus X$ and $u \in N_G(v_i)$ for some $i$, then by \labelcref{obs:rotateonetoone} we could rotate $v_i$ to $w$ using the pivot $u$, contradicting $w \notin C(F_{\ell+1},X)$. Moreover, if $u \in N_G(v_i) \cap N_G(v_j)$ for some $i \neq j$, then by \labelcref{obs:rotatetwotoone} we could rotate one of the two endpoints $v_i$ or $v_j$ to $w$ using the pivot $u$, again contradicting $w \notin C(F_{\ell+1},X)$. This proves the claim.
        \item If $u \in C_0(F_{\ell+1},X)$, we claim that either $u \in \End(F_{\ell+1}) \setminus X$ and $u \in N_G(v_i) \cap C(F_i,X_i)$ for up to one value of $i$, or $u \notin \End(F_{\ell+1}) \setminus X$ and $u \in N_G(v_i) \cap C(F_i,X_i)$ for up to two values of $i$. Indeed, if $u \in \End(F_{\ell+1}) \setminus X$ and $u \in N_G(v_i) \cap N_G(v_j)$ for some $i \neq j$, then by \labelcref{obs:reducetwotoone} we could reduce the number of paths of $F_{\ell+1}$ by adding one of the two edges $u v_i$ or $u v_j$, contradicting that $F_{\ell+1}$ is a minimum linear forest. Moreover, if $u \in N_G(v_i) \cap N_G(v_j)$ for some $i < j$, then $u \notin C(F_k,X_k)$ for any $k > j$. Otherwise, we could find a linear forest $F' \in \cF(F_k,X_k)$ with $\{u, v_i, v_j\} \subseteq \End(F')$, but then by \labelcref{obs:reducetwotoone} we could reduce the number of paths of $F'$ by adding one of the two edges $u v_i$ or $u v_j$, contradicting that $F'$ is a minimum linear forest. This proves the claim.
    \end{enumerate}
    In all cases, the contribution of a vertex to the left side of inequality \labelcref{eq:sizeneighbourssizecomponent} is at most as large as its contribution to the right side, which proves this inequality.

    We may assume that $\abs{\End(F_{\ell+1}) \setminus X} \cdot (d+1)/4 = \abs{\End(F) \setminus X} \cdot (d+1)/4 \ge \abs{C(F,X)}$, as we would be done otherwise. Since $C(F,X) = C(F_{\ell+1},X)$, we therefore have
    \begin{equation}
        \frac{d+1}{2} \cdot \abs{\End(F_{\ell+1}) \setminus X} \ge \abs{C(F,X)} = \abs{C(F_{\ell+1},X)} \ge \frac{1}{2} \cdot \abs{C_1(F_{\ell+1},X)} + \abs{C_0(F_{\ell+1},X)}. \label{eq:endpointssizecomponent}
    \end{equation}
    By combining the inequalities \labelcref{eq:edgessumdelta,eq:sumdeltasizeneighbours,eq:sizeneighbourssizecomponent,eq:endpointssizecomponent} and using again that $C(F,X) = C(F_{\ell+1},X)$, we get that
    \begin{align*}
        2 \cdot e(C(F_{\ell+1},X)) & < \frac{d+1}{2} \sum_{i=1}^\ell \abs{N_G(v_i) \cap C(F_i,X_i)} \\
        & \le \frac{d+1}{2} \cdot \abs{C_1(F_{\ell+1},X)} + (d+1) \cdot \abs{C_0(F_{\ell+1},X)} - \frac{d+1}{2} \cdot \abs{\End(F_{\ell+1}) \setminus X} \\
        & \le \frac{d}{2} \cdot \abs{C_1(F_{\ell+1},X)} + d \cdot \abs{C_0(F_{\ell+1},X)}.
    \end{align*}
    By \cref{lem:doublecount}, it follows that $e(C(F_{\ell+1},X),B_1(F_{\ell+1},X)) > \abs{B_1(F_{\ell+1},X)} \cdot d/2$. Since we know that $F_{\ell+1} \in \cF(F,X)$, by \cref{lem:b0empty} this is equivalent to $e(C(F,X),B_1(F,X)) > \abs{B_1(F,X)} \cdot d/2$. As in the proof of \cref{thm:smalllinearforestbipartite}, we can then find a vertex $v \in C(F,X)$ such that if $X' \coloneqq X \cup \{v\}$ and $F' \in \cF(F,X)$ is a linear forest with $X' \subseteq \End(F')$, then $\abs{C(F,X)} \ge \abs{C(F',X')} + (d+1)/2$.
    By the induction hypothesis, it then follows that
    \[
        \abs{C(F,X)} \ge \abs{C(F',X')} + \frac{d+1}{2} \ge \frac{d+1}{4} \cdot \abs{\End(F') \setminus X'} + \frac{d+1}{2} \ge \frac{d+1}{4} \cdot \abs{\End(F) \setminus X}. \qedhere
    \]
\end{proof}

\section{Linear arboricity up to logarithmic error}\label{sec:lineararboricity}

We now use the results from the previous section to show that the linear arboricity of any graph $G$ on $n$ vertices with maximum degree $\Delta$ is at most $\Delta/2 + \cO(\log n)$. We achieve this by repeatedly choosing linear forests with few paths and removing them from the graph. If the maximum degree of the remaining graph after $\Delta/2$ iterations is $\cO(\log n)$, we can decompose the remaining edges into $\cO(\log n)$ linear forests, say with Vizing's theorem, to obtain the desired upper bound on the linear arboricity.

Note that the degree of a vertex after the first $\Delta/2$ iterations can only be larger than $\cO(\log n)$ if the vertex appeared as an endpoint in more than $\cO(\log n)$ linear forests during those iterations. To avoid this, we will choose the linear forests in each iteration randomly in such a way that every vertex has a low probability of being an endpoint. Then, we can simply use standard concentration inequalities and a union bound to bound the probability that any vertex is an endpoint in more than $\cO(\log n)$ linear forests, which completes the proof.

We start by showing that for every $d$-regular graph $G$, there exists a distribution on the minimum linear forests of $G$ such that every vertex has a low probability of being an endpoint. Recall that $\End(F)$ denotes the multiset of endpoints of a linear forest $F$. In the proof, we will make extensive use of the notation introduced in \cref{sec:smalllinearforests}.

\begin{lemma}\label{lem:lowendpointprobability}
    Let $G$ be a $d$-regular graph. Then, we can choose a random minimum linear forest $F$ of $G$ in such a way that for all vertices $v \in V(G)$ it holds that $\pr(v \in \End(F)) \le 16/(d+1)$.
\end{lemma}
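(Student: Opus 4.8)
The plan is to construct the required distribution by a randomised replay of the proof of \cref{prop:componentsize}. Concretely, I would prove by induction on $\abs{\End(F)\setminus X}$ the following stronger statement: for every minimum linear forest $F$ of $G$ and every $X\subseteq\End(F)$ there is a distribution $\mathcal D(F,X)$ on $\cF(F,X)$ such that $X\subseteq\End(F')$ almost surely and $\pr_{F'\sim\mathcal D(F,X)}\bigl(v\in\End(F')\setminus X\bigr)\le 16/(d+1)$ for every vertex $v\in V(G)$. Taking $X=\emptyset$ then yields the lemma, since $v\in\End(F)$ exactly when $v\in\End(F)\setminus\emptyset$. The base case $\End(F)\setminus X=\emptyset$ is immediate, as then $\cF(F,X)=\{F\}$.

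For the inductive step I would perform a single ``batch'' of rotations that fixes some number $k\ge1$ of additional endpoints. The batch should be designed so that, upon passing to the enlarged fixed set, the rotation-component $C(F,X)$ loses a random set $S$ with the following properties: the $k$ newly fixed endpoints form a set $Z\subseteq S$, one has $\abs S\ge\tfrac{d+1}{4}k$, and $\pr(v\in Z)\le\tfrac{16}{d+1}\pr(v\in S)$ for every $v$ (equivalently, each vertex of $S$ is one of the new endpoints with conditional probability at most $16/(d+1)$). We then recurse: pick any minimum linear forest $F''\in\cF(F,X\cup Z)$ realising $X\cup Z$ as endpoints and draw the tail from $\mathcal D(F'',X\cup Z)$. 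Writing $q(F,X,v):=\pr_{\mathcal D(F,X)}(v\in\End(F')\setminus X)$, the key point is that a vertex $v\in S$ is, by \cref{lem:fixneighbourofb1} together with the monotonicity $C(F,X\cup Z)\subseteq C(F,X)\setminus S$ of the rotation-components along the batch, absent from every later rotation-component, hence from $\End(F'')\setminus(X\cup Z)$; so ``$v\in S$'' forces ``$v\in\End(F')\setminus X$ iff $v\in Z$''. Since $Z\subseteq S$, this gives $q(F,X,v)=\pr(v\in Z)+\ev\bigl[\mathbbm{1}[v\notin S]\,q(F'',X\cup Z,v)\bigr]\le \pr(v\in Z)+\tfrac{16}{d+1}\pr(v\notin S)$, which is at most $16/(d+1)$ by the batch property together with the inductive hypothesis $q(F'',X\cup Z,v)\le 16/(d+1)$.

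It remains to build the batches. When $C(F,X)$ spans no edges --- in particular when $C(F,X)$ is independent --- \cref{lem:doublecount} produces a vertex $u\in B_1(F,X)$ with $\abs{N_G(u)\cap C(F,X)}\ge(d+1)/2$; the batch then fixes a single new endpoint, a uniformly random vertex $v_0$ of $N_G(u)\cap C(F,X)$ (after rotating to make it an endpoint, as in \cref{lem:b0empty}), and \cref{lem:fixneighbourofb1} guarantees $S\supseteq N_G(u)\cap C(F,X)$, so $k=1$, $\abs S\ge(d+1)/2$, and the conditional bound holds because $Z=\{v_0\}$ is uniform on $N_G(u)\cap C(F,X)$; this already recovers the bipartite case along the lines of \cref{thm:smalllinearforestbipartite}. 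For general $G$ one runs the maximum-degree process of \cref{prop:componentsize}: either some prefix of it witnesses that $C(F,X)$ has shrunk by at least $(d+1)/4$ per step --- that prefix is then the batch, fixing $k\ge1$ endpoints and deleting a set $S$ with $\abs S\ge\tfrac{d+1}{4}k$ --- or the process certifies (via the same computation and case distinction as in \cref{prop:componentsize}, now keeping all choices random) that $C(F,X)$ spans few enough edges to return to the previous case.

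The main obstacle is making the general batch \emph{spread out}: the set $Z$ of the $k$ newly fixed endpoints must meet each vertex of the deleted set $S$ with probability at most $16/(d+1)$. The counting budget is present --- one has $\abs S\ge\tfrac{d+1}{4}k$, so a uniformly random $k$-subset of $S$ would work with a factor $4$ to spare --- but in \cref{prop:componentsize} the $k$ endpoints are pinned to successive maximum-degree vertices of a shrinking chain of rotation-components rather than chosen freely. Already a disjoint union of cliques $K_{d+1}$ shows why batches must sometimes fix several endpoints at once: fixing one endpoint of a Hamilton path of $K_{d+1}$ deletes only that vertex from the rotation-component, whereas fixing \emph{both} endpoints deletes the whole clique, and there one should take the two endpoints to be a uniformly random pair of the $d+1$ vertices (which is realisable since $K_{d+1}$ has a Hamilton path with any prescribed pair of endpoints). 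Carrying this out in general means randomising the tie-breaking among maximum-degree vertices and the intermediate-forest choices inside the maximum-degree process, and then verifying --- using the disjointness of the per-step deletions and the monotonicity of the chain --- that the induced marginals of $Z$ on $S$ are flat enough; this, together with the multiset bookkeeping needed to handle isolated vertices throughout, is the delicate part of the argument.
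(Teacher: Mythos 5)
Your reduction of the lemma to the existence of ``spread-out batches'' is fine as far as it goes (the recursion $q(F,X,v)=\pr(v\in Z)+\ev\bigl[\mathbbm{1}[v\notin S]\,q(F'',X\cup Z,v)\bigr]$ is sound, modulo the multiset issues you flag), but the construction of the batches in the general case is exactly where the proof is missing, and you say so yourself. In the branch of your plan where a prefix of the maximum-degree process of \cref{prop:componentsize} already shrinks the component by $(d+1)/4$ per step, the newly fixed set $Z$ consists of the successive maximum-degree vertices $v_1,\dots,v_k$, which are essentially forced (up to tie-breaking): if the first maximum-degree vertex is unique, then $\pr(v_1\in Z)=1$, which violates the required flatness $\pr(v\in Z)\le \tfrac{16}{d+1}\pr(v\in S)$ as soon as $d>15$. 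Randomising tie-breaks and intermediate forests does not help when there are no ties, and nothing in \cref{prop:componentsize} lets you swap $v_i$ for a uniformly random vertex of the deleted set while retaining the shrinkage guarantee. So the ``delicate part'' you defer is not bookkeeping; it is the heart of the matter, and the clique example you give (where symmetry saves you) is not representative of the difficulty.

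The paper avoids this entirely by never opening up the proof of \cref{prop:componentsize}: it is used only as a black-box lower bound $\abs{C(F,X)}\ge \tfrac{d+1}{4}\abs{\End(F)\setminus X}$, and the spread-out property is obtained by a two-stage averaging argument rather than by controlling which vertices get fixed. Concretely, for each $u\in C(F,X)$ pick some $F_u\in\cF(F,X)$ with $X\cup\{u\}\subseteq\End(F_u)$ and let $F_u'$ be the random forest given by induction for $(F_u,X\cup\{u\})$. First consider the auxiliary distribution $F''$ obtained by choosing $u$ uniformly from \emph{all} of $C(F,X)$: linearity of expectation gives $\sum_{v\in C(F,X)}\pr(v\in\End(F'')\setminus X)=\abs{\End(F)\setminus X}$, so by \cref{prop:componentsize} at least half the vertices of $C(F,X)$ satisfy $\pr(v\in\End(F'')\setminus X)\le 8/(d+1)$; call this set $U$. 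The actual distribution $F'$ chooses $u$ uniformly from $U$ instead. Vertices $v\notin U$ are never the newly fixed endpoint, so they inherit the inductive bound $16/(d+1)$ directly, while vertices $v\in U$ lose at most the factor $\abs{C(F,X)}/\abs{U}\le 2$ relative to $F''$, giving $16/(d+1)$ as well. This single fix-one-endpoint-at-a-time resampling is the idea your proposal is missing; with it, no structural control over the process inside \cref{prop:componentsize} is needed.
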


\begin{proof}
    We claim that for all minimum linear forests $F$ of $G$ and all $X \subseteq \End(F)$, we can choose a random linear forest $F' \in \cF(F,X)$ in such a way that for all vertices $v \in V(G)$ it holds that $\pr(v \in \End(F') \setminus X) \le 16/(d+1)$. For $X = \emptyset$, this implies the result. We prove the claim by induction on $\ell \coloneqq \abs{\End(F) \setminus X}$. The claim clearly holds for $\ell = 0$, so suppose that $\ell > 0$.

    For all $u \in C(F,X)$, let $F_u \in \cF(F,X)$ be a linear forest with $X \cup \{u\} \subseteq \End(F_u)$. By the induction hypothesis, we know that we can choose a random linear forest $F_u' \in \cF(F_u,X \cup \{u\})$ such that for all vertices $v \in V(G)$ it holds that $\pr(v \in \End(F_u') \setminus (X \cup \{u\})) \le 16/(d+1)$. Choose a random linear forest $F'' \in \cF(F,X)$ by first choosing $u \in C(F,X)$ uniformly at random and then choosing the random linear forest $F_u'$. By the linearity of expectation, we know that
    \[
        \sum_{v \in C(F,X)} \pr(v \in \End(F'') \setminus X) = \ev(\abs{\End(F'') \setminus X}) = \abs{\End(F) \setminus X}.
    \]
    By \cref{prop:componentsize}, we also have $\abs{C(F,X)} \ge \abs{\End(F) \setminus X} \cdot (d+1)/4$. This implies that the set $U \coloneqq \{v \in C(F,X) : \pr(v \in \End(F'') \setminus X) \le 8/(d+1)\}$ must contain at least half of the vertices of $C(F,X)$.

    Now, choose the random linear forest $F' \in \cF(F,X)$ by first choosing $u \in U$ uniformly at random and then choosing the random linear forest $F_u'$. For all vertices $v \in V(G) \setminus U$, we have
    \begin{align*}
        \pr(v \in \End(F') \setminus X) & = \frac{1}{\abs{U}} \sum_{u \in U} \pr(v \in \End(F_u') \setminus X) \\
        & = \frac{1}{\abs{U}} \sum_{u \in U} \pr(v \in \End(F_u') \setminus (X \cup \{u\})) \le \frac{16}{d+1}.
    \end{align*}
    For all vertices $v \in U$, we have
    \begin{align*}
        \pr(v \in \End(F') \setminus X) & = \frac{1}{\abs{U}} \sum_{u \in U} \pr(v \in \End(F_u') \setminus X) \\
        & \le \frac{1}{\abs{U}} \sum_{u \in C(F,X)} \pr(v \in \End(F_u') \setminus X) \\
        & = \frac{\abs{C(F,X)}}{\abs{U}} \cdot \pr(v \in \End(F'') \setminus X) \le \frac{16}{d+1}. \qedhere
    \end{align*}
\end{proof}

We can now prove \cref{thm:lineararboricitylogerror}. As described earlier in this section, our strategy is to repeatedly remove linear forests from the graph $G$ using \cref{lem:lowendpointprobability}. Since that result only applies to regular graphs, we need to regularise $G$ after every step, but the remainder of the proof works exactly as outlined before.

\begin{proof}[Proof of \cref{thm:lineararboricitylogerror}]
    Let $G$ be a graph on $n$ vertices with maximum degree $\Delta$. Let $d \coloneqq \ceil{\Delta/2}$. By adding vertices and edges to $G$, we may extend $G$ to a $2d$-regular graph $G_d$.

    Consider the following iterative process. Start with the $2d$-regular graph $G_d$. Then, given a $2i$-regular graph $G_i$, let $C_i \subseteq G_i$ be a $2$-regular spanning subgraph of $G_i$. This subgraph exists because $G_i$ is regular with even degree. Choose a random minimum linear forest $F_i$ of $G_i$ according to \cref{lem:lowendpointprobability} so that for all vertices $v \in V(G_i)$ it holds that $\pr(v \in \End(F_i)) \le 16/(2i+1)$. Then, let $E_i \subseteq E(C_i)$ be the subset of all edges of $C_i$ that are incident to some endpoint of $F_i$. Note that every vertex of $G_i$ is incident to at least two edges of $F_i \cup E_i$. This implies that the subgraph $G_i \setminus (F_i \cup E_i)$ has maximum degree at most $2(i-1)$ and can therefore be extended to a $2(i-1)$-regular graph $G_{i-1}$.

    We iterate this process until we reach the empty graph $G_0$. Note that the restriction of each linear forest $F_i$ to the initial graph $G$ is a linear forest. We also have that every edge of $G$ is contained either in one of the linear forests $F_i$ or in one of the sets $E_i$. So, the degree of a vertex $v \in V(G)$ in $G' \coloneqq G \setminus (\bigcup_{i=1}^d F_i)$ is at most the number of edges of $\bigcup_{i=1}^d E_i$ that are incident to $v$.

    Let $D_v^i$ be the indicator random variable of the event that $v$ is incident to an edge of $E_i$. Since $v$ is incident to at most two edges of $E_i$, the degree of $v$ in $G'$ is at most $D_v \coloneqq \sum_{i=1}^d 2 D_v^i$. Moreover, $v$ can only be incident to an edge of $E_i$ if $v$ or one of its two neighbours in $C_i$ is an endpoint of $F_i$. By construction, the probability that a vertex is an endpoint of $F_i$ is at most $16/(2i+1) \le 8/i$, regardless of the choices of $F_{i+1}, \dots, F_d$. It follows that $\pr(D_v^i = 1) \le 24/i$, even conditioned on the values of $D_v^{i+1}, \dots, D_v^d$. So, the random variables $D_v^1, \dots, D_v^d$ are stochastically dominated by a collection of independent Bernoulli random variables $X_1, \dots, X_d$ with $\pr(X_i = 1) = 24/i$. In particular, $X \coloneqq \sum_{i=1}^d 2 X_i$ satisfies $D_v \le X$. Since $\mu \coloneqq \ev(X) \le 48 (\log d + 1)$, a standard Chernoff bound with $\delta \coloneqq 48 (\log n + 1) / \mu \ge 1$ implies that
    \[
        \pr(D_v > (1 + \delta) \mu) \le \pr(X > (1 + \delta) \mu) \le e^{-\frac{\delta^2 \mu}{2 + \delta}} \le e^{-\frac{\delta \mu}{3}} = e^{-16 (\log n + 1)} < \frac{1}{n}.
    \]
    So, a union bound shows that with positive probability, $D_v \le (1 + \delta) \mu = \cO(\log n)$ for all $v \in V(G)$. In that case, the maximum degree of $G'$ is at most $\cO(\log n)$ which implies that $G'$ can be decomposed into at most $\cO(\log n)$ linear forests. Together with $F_1, \dots, F_d$, this provides a decomposition of $G$ into at most $d + \cO(\log n) = \Delta/2 + \cO(\log n)$ linear forests.
\end{proof}

The proof for fractional linear arboricity follows almost the same strategy, except that it ignores vertices with degree larger than $\cO(\log \Delta)$ in the remaining graph. This produces a family of $\Delta/2 + \cO(\log \Delta)$ linear forests such that each edge is contained in that family with high probability. If we then choose a uniformly random linear forest from that family, this proves the upper bound on the fractional linear arboricity.

\begin{proof}[Proof of \cref{thm:fractionallineararboricitylogerror}]
    Let $G$ be a graph on $n$ vertices with maximum degree $\Delta$. Define $d$, $F_i$, $G'$, $D_v$, and $X$ as in the proof of \cref{thm:lineararboricitylogerror}. Since $\mu \coloneqq \ev(X) \le 48 (\log d + 1)$, a standard Chernoff bound with $\delta \coloneqq 48 (\log d + 1) / \mu \ge 1$ implies that
    \[
        \pr(D_v > (1 + \delta) \mu) \le \pr(X > (1 + \delta) \mu) \le e^{-\frac{\delta^2 \mu}{2 + \delta}} \le e^{-\frac{\delta \mu}{3}} = e^{-16 (\log d + 1)} < \frac{1}{d^2}.
    \]
    So, the probability that $v$ has degree larger than $c \coloneqq (1 + \delta) \mu = \cO(\log d)$ in $G'$ is less than $1/d^2$. Let $E \subseteq E(G')$ be the subset of all edges of $G'$ that are incident to a vertex of degree larger than $c$ in $G'$. In particular, every edge of $G$ is contained in $E$ with probability at most $2/d^2$. Moreover, the graph $G'' \coloneqq G' \setminus E$ has maximum degree $c$ which implies that $G''$ can be decomposed into at most $c+1$ linear forests $F_1', \dots, F_{c+1}'$. Choose a linear forest $F \in \{F_1, \dots, F_d, F_1', \dots, F_{c+1}'\}$ uniformly at random. Then, for every edge $e \in E(G)$ of $G$, we have
    \[
        \pr(e \in F) = \pr(e \notin E) \cdot \pr(e \in F \mid e \notin E) \ge \left(1 - \frac{2}{d^2}\right) \frac{1}{d + c + 1} \ge \frac{1}{d + \cO(\log d)}.
    \]
    So, the fractional linear arboricity of $G$ is at most $d + \cO(\log d) = \Delta/2 + \cO(\log \Delta)$.
\end{proof}

\section{Open problems}\label{sec:openproblems}

By proving the conjecture of Feige and Fuchs, we removed a major obstacle towards resolving the Linear Arboricity Conjecture. However, the bound on the linear arboricity that we obtained in \cref{thm:lineararboricitylogerror} has an error term of $\cO(\log n)$, which depends on $n$. A natural next step is to replace this error term with a term polynomial in $\log \Delta$. To achieve this, one would like to replace the union bound that we use with a variant of the Lov\'asz Local Lemma or a similar technique. Unfortunately, the dependency structure of the corresponding bad events is extremely intricate, and so it is not obvious how to make such a strategy work. We believe that it would already be interesting to provide such an improvement for bipartite graphs.

\begin{problem}
    Do all bipartite graphs $G$ with maximum degree $\Delta$ satisfy $\la(G) \le \Delta/2 + (\log \Delta)^{\cO(1)}$?
\end{problem}

In bipartite graphs, we can sample a random linear forest more easily than for non-bipartite graphs. Indeed, as shown in the proof of \cref{thm:smalllinearforestbipartite}, we can always find a vertex $u \in B_1(F,X)$ with at least $(d+1)/2$ neighbours in $C(F,X)$. If we then fix an endpoint of the linear forest at a uniformly random neighbour of $u$ in $C(F,X)$, all neighbours of $u$ will afterwards be removed from $C(F,X)$. Repeating this process results in a random linear forest where every vertex is an endpoint with probability at most $2/(d+1)$.

When we iteratively remove linear forests from a graph, in later iterations we would like to fix endpoints only at vertices that have not been endpoints in too many previous iterations. This would be possible if we never fix too many endpoints in the neighbourhood of a vertex since we could then simply avoid these vertices when generating a new linear forest. Note that the above random linear forest has a very low probability of fixing many endpoints in the neighbourhood of a given vertex. However, we cannot apply the Lov\'asz Local Lemma because the vertex $u$ as well as the subset of its neighbours that are contained in $C(F,X)$ can depend on previously fixed endpoints, creating large dependencies. Even just the fact that we only have a subset of the neighbours of $u$ available when we fix an endpoint seems to make this problem difficult. More precisely, even if we knew in advance the sequence of vertices $u$ obtained while fixing endpoints, but not the subset of available neighbours, we do not know how to ensure that we fix at most $(\log \Delta)^{\cO(1)}$ endpoints in the neighbourhood of any vertex.

We believe that the following online matching game exhibits the same difficulty and could be of independent interest. Suppose that $G$ is a bipartite graph with bipartition $U \cup W$ such that every vertex in $U$ has degree $c \cdot d$ while every vertex in $W$ has degree at most $d$. Let $U = \{u_1, \dots, u_n\}$. Now, for $i = 1, \dots, n$, we receive a subset $U_i \subseteq N(u_i)$ of size $c \cdot d / 2$, and we immediately have to match $u_i$ to an unmatched vertex in $U_i$. If the sets $U_i$ were determined in advance and $c \ge 2$, Hall's theorem would imply that we could find a perfect matching.

\begin{problem}
    If $c$ is a sufficiently large constant, is there a way to construct a perfect matching in this online matching game where an adversary chooses the subsets $U_i$ based on the matching of $u_1, \dots, u_{i-1}$?
\end{problem}

An alternative approach to replacing the error term of $\cO(\log n)$ with $(\log \Delta)^{\cO(1)}$ could be to prove stronger properties about how $C(F,X)$ evolves when fixing endpoints. This might be enough to recover the limited dependencies necessary to apply the Lov\'asz Local Lemma.

In terms of the existence of linear forests with few paths, the conjecture of Magnant and Martin remains open up to a factor of two, but we believe that our techniques could be sufficient to eventually resolve this conjecture completely. Note that even for regular bipartite graphs where we proved this conjecture, our result still differs by a factor of roughly two from the best lower bound of $n/(2d)$ that comes from a disjoint union of complete bipartite graphs.

There are also several potential generalisations of the conjecture of Feige and Fuchs that remain open. For instance, this conjecture might generalise to cycles\footnote{Here, an individual edge is considered to be a cycle of length two.} and to directed regular graphs.

\begin{conjecture}
    Every directed $d$-regular $n$-vertex graph has a collection of $\cO(n/d)$ vertex-disjoint cycles that cover all vertices of the graph.
\end{conjecture}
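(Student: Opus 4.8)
The plan is to transport the rotation machinery of \cref{sec:smalllinearforests} to digraphs, with \emph{cycle factors} --- spanning collections of vertex-disjoint directed cycles --- playing the role of linear forests. By a directed $d$-regular graph we mean one in which every vertex has in-degree and out-degree $d$; we may also assume it is loopless. The first step is to note that a cycle factor always exists: the \emph{split bipartite graph} $H$ of a $d$-regular digraph $D$ --- with vertices $v^+$ and $v^-$ for each $v \in V(D)$ and an edge $v^+ w^-$ whenever $v \to w$ is an arc --- is $d$-regular and bipartite, so it has a perfect matching, and a perfect matching of $H$ is nothing but a cycle factor of $D$, whose cycles are the cycles of the corresponding permutation. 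It therefore suffices to prove that the cycle factor of $D$ with the fewest cycles has only $\cO(n/d)$ of them. Equivalently, writing $M_0 \coloneqq \{v^+ v^- : v \in V(D)\}$ and $G^* \coloneqq H \cup M_0$, which is $(d+1)$-regular and bipartite, one wants a $2$-factor of $G^*$ containing the prescribed perfect matching $M_0$ with $\cO(n/d)$ components, since these components correspond exactly to the cycles of the associated cycle factor of $D$.

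To bound the number of cycles I would imitate the proof of \cref{prop:componentsize}. Break one cycle of a minimum cycle factor at a single arc to obtain a spanning structure made of one directed path $P$ together with directed cycles, and let $C$ be the set of vertices that can become the \emph{tail} of $P$ through a sequence of admissible moves that fixes a prescribed set of endpoints. Two moves replace the undirected rotation. An \emph{absorption} uses an arc $t \to z$ from the tail $t$ of $P$ to a vertex $z$ of a cycle $Z$: deleting the arc of $Z$ into $z$ and adding $t \to z$ folds $Z$ into $P$; such a move lowers the number of cycles, so minimality restricts it just as \labelcref{obs:reduceonetoone} and \labelcref{obs:reducetwotoone} restrict path-reducing rotations. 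A \emph{split} uses an arc from $t$ to an internal vertex of $P$: since a directed segment cannot be reversed, this peels that segment off as a new cycle, the genuine directed analogue of a P\'osa rotation. Running these moves and their mirror images at the head of $P$, one would re-prove directed versions of the observations (R1)--(R3) and of \cref{lem:b0empty,lem:fixneighbourofb1,lem:doublecount}, and then iterate by fixing endpoints at high-degree boundary vertices, exactly as in \cref{sec:smalllinearforests}.

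The hard part will be the combinatorial engine of \cref{prop:componentsize}: that argument turns $\ell$ unfixed \emph{endpoints} into $\Omega(d)$ distinct reachable vertices each, but a cycle has no endpoints, so for a pure cycle factor there is a priori nothing to count. An inert minimum cycle factor --- for instance a disjoint union of complete digraphs on $d+1$ vertices, which admits no absorption at all --- shows that breaking a single cycle cannot on its own certify the bound for all $\Omega(n/d)$ cycles of the factor. What is missing is a quantitative link between the number of cycles of the factor and $\abs{C}$, routed through the absorption moves, and building such a link appears to require a genuinely new idea; this is presumably why the conjecture is still open. The secondary obstacle --- that the directed rotation peels off a cycle rather than reversing a segment, so that the case analysis behind (R1)--(R3) and the identity $\cF(F,X) = \cF(F',X)$ must be rebuilt around the absorb/peel dichotomy, perhaps enlarging the move set to restore reversibility --- is, by comparison, routine. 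Passing to the $2$-factor reformulation via $G^*$ removes this secondary obstacle but not the primary one: one must then build a $2$-factor through a prescribed perfect matching with few alternating cycles, which is the same difficulty in another guise.
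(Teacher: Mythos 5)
You have not proved this statement, and you should be aware that the paper does not prove it either: it appears in \cref{sec:openproblems} as an open conjecture, with the best known bound being the $\cO((n\log d)/d)$ cycles from the authors' earlier work \cite{christoph2025cyclefactors}. So there is no proof in the paper to compare yours against, and your proposal --- which explicitly stops short of a proof --- is correctly calibrated rather than mistaken. Your setup is sound: the split bipartite graph argument does show a cycle factor exists, the reformulation as a $2$-factor of $G^*$ through the prescribed matching $M_0$ is a faithful translation, and the absorb/peel dichotomy is the right directed analogue of the two rotation types in \cref{fig:linearforestrotations}.

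The genuine gap is exactly the one you name, and it is worth being precise about why it is fatal to a direct transport of \cref{prop:componentsize}. The entire engine of \cref{sec:smalllinearforests} converts the quantity $\abs{\End(F)\setminus X}$ into reachable vertices at a rate of $(d+1)/4$ per endpoint; the number of paths is then read off as half the number of endpoints. A cycle factor has no endpoints, so the quantity to be bounded (the number of cycles) is no longer tethered to anything the rotation machinery can count --- breaking one cycle manufactures two endpoints regardless of whether the factor has $2$ cycles or $n/3$. Your example of a disjoint union of complete digraphs on $d+1$ vertices illustrates the inertness of the absorption moves, though note it is consistent with the conjecture (it achieves $n/(d+1)$ cycles, the extremal value), so it shows only that the method stalls, not that the bound fails. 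What is missing is a potential function playing the role of $\End(F)$ for cycle factors, or an entirely different argument; until that exists, the statement remains a conjecture, as the paper records.
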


The authors of the current paper \cite{christoph2025cyclefactors} very recently proved that there exists a collection of $\cO((n \log d)/d)$ cycles that satisfy this, which is the current best upper bound. In addition, they provided a randomised polynomial-time algorithm that computes such a collection. As a corollary, they also obtained an efficient algorithm for computing a tour of length $(1 + \cO((\log d)/d)) \cdot n$ in a connected $d$-regular $n$-vertex graph.

In contrast, all of the results in this paper are non-constructive, and so we do not obtain efficient algorithms. This raises the question whether there are efficient algorithms for finding spanning linear forests with $\cO(n/d)$ paths in $d$-regular $n$-vertex graphs. This would also imply the existence of efficient algorithms for computing a tour of length $(1 + \cO(1/d)) \cdot n$ in connected regular graphs. An efficient algorithm would likely improve our understanding of small linear forests significantly, which may lead to further progress on the Linear Arboricity Conjecture.

\begin{problem}\label{prob:linearforestefficientalgorithm}
    Is there an algorithm that can efficiently compute a spanning linear forest of a $d$-regular $n$-vertex graph with at most $\cO(n/d)$ paths?
\end{problem}

In particular, what is the computational complexity of finding a rotation-component $C(F,X)$? This could plausibly be NP-hard. In extremal combinatorics it is unusual for a proof to rely on hard-to-compute objects, but this does occur with algebraic methods such as the method of interlacing polynomials \cite{marcus2013interlacing,marcus2015interlacing} and topological methods such as applications of the Borsuk-Ulam Theorem \cite{matouvsek2003using}. Note that for Hamiltonian cubic graphs where minimum linear forests are Hamilton paths, it is known that the number of rotations required to rotate one endpoint to some fixed vertex of the graph may be exponential in $n$. This follows, for example, from a construction in \cite{brianski2022short}.

\bibliography{bib}

\end{document}